\newtheorem{theorem}{Theorem}[section]
 \newtheorem{proposition}[theorem]{Proposition}
 \newtheorem{corollary}[theorem]{Corollary}
\def\CC{{\mathbb C}}
 \def\RR{{\mathbb R}}
 \def\SS{{\mathbb S}}
\def\SD{{\mathscr D}}
 \def\SF{{\mathscr F}}
 \title{\bf The Real Characterization of $H_{\lambda}^p(\RR_+^2)$  for  $\frac{2\lambda}{2\lambda+1}<p\leq1$
 \thanks{1} \footnote{E-mail:
huzhuoran010@163.com[ZhuoRan Hu].}}
\author{{ZhuoRan Hu}\\
{\small }\\
{\small Beijing 100048, China}}
\begin{document}
\maketitle \setcounter{page}{1} \pagestyle{myheadings}
 \markboth{Hu}{The Real Characterization of $H_{\lambda}^p(\RR_+^2)$  for  $\frac{2\lambda}{2\lambda+1}<p\leq1$}
\begin{abstract}
For $p>p_0=\frac{2\lambda}{2\lambda+1}$ with $\lambda>0$, the Hardy space $H_{\lambda}^p(\RR_+^2)$  associated with the Dunkl transform $\mathcal{F}_\lambda$ and the Dunkl operator $D$ on the real line $\RR$, where $D_xf(x)=f'(x)+\frac{\lambda}{x}[f(x)-f(-x)]$, is the set of functions $F=u+iv$ on the upper half plane $\RR^2_+=\left\{(x, y): x\in\RR, y>0\right\}$, satisfying $\lambda$-Cauchy-Riemann equations: $ D_xu-\partial_y v=0$, $\partial_y u +D_xv=0$, and $\sup\limits_{y>0}\int_{\RR}|F(x+iy)|^p|x|^{2\lambda}dx<\infty$. In this paper, we  will give a further characterization of $H_{\lambda}^p(\RR_+^2)$ in\,\cite{ZhongKai Li 3}. We prove the inequality $\|F\|_{H_{\lambda}^p(\RR_+^2)}\leq c\|u_{\nabla}^*\|_{L^p_{\lambda}}$, which gives a Real Characterization of the class $H_{\lambda}^p(\RR_+^2)$ for $\frac{2\lambda}{2\lambda+1}<p\leq1$ as a main result.

\vskip .2in
 \noindent
 {\bf 2000 MS Classification:} 42B20, 42B25, 42A38.
 \vskip .2in
 \noindent
 {\bf Key Words and Phrases:}  Hardy spaces, Dunkl transform,
 Dunkl setting,  Real Characterization
 \end{abstract}

 %%%%%%%%%%%%%%%%%%%%%%%%%%%%%%%%%%%%%%%%%%%%%%%%%%%%%%%%%%%%%%%%%%%%%%%%%%%%%%%%%%%%%%%%%%
\setcounter{page}{1}
%%%%%%%%%%%%%%%%%%%%%%%%%%%%%%%%%%%%%%%%%%%%%%%%%%%%%%%%%%%%%%%%%%%%%%%%%%%%%%%%%%%%%%%%%%

\section{Introduction}
In\,\cite{ZhongKai Li 3}, a theory about the Hardy spaces on the half-plane $\RR^2_+=\left\{(x, y): x\in\RR, y>0\right\}$ associated with the Dunkl transform on the line $\RR$ is developed. For $0<p<\infty$, $L_{\lambda}^p(\RR)$ is the set of measurable functions satisfying
$ \|f\|_{L_{\lambda}^p}=\Big(c_{\lambda}\int_{\RR}|f(x)|^p|x|^{2\lambda}dx\Big)^{1/p}$ $<\infty$ with
$c_{\lambda}^{-1}=2^{\lambda+1/2}\Gamma(\lambda+1/2)$,
and $p=\infty$ is the usual $L^\infty(\RR)$ space.
For $\lambda\geq0$, The Dunkl operator on the line is\,(cf. \cite{Du3},\cite{Du6}):
$$D_xf(x)=f'(x)+\frac{\lambda}{x}[f(x)-f(-x)]$$
involving a reflection part.
The Dunkl transfrom  for $f\in L_{\lambda}^1(\RR)$ is given by:
\begin{eqnarray}\label{fourier}
(\SF_{\lambda}f)(\xi)=c_{\lambda}\int_{\RR}f(x)E_\lambda(-ix\xi)|x|^{2\lambda}dx,\quad
\xi\in\RR ,
\end{eqnarray}
where $E_{\lambda}(-ix\xi)$ is the Dunkl kernel
$$E_{\lambda}(iz)=j_{\lambda-1/2}(z)+\frac{iz}{2\lambda+1}j_{\lambda+1/2}(z),\ \  z\in\CC$$
and $j_{\alpha}(z)$ is the normalized Bessel function
$$j_{\alpha}(z)=2^{\alpha}\Gamma(\alpha+1)\frac{J_{\alpha}(z)}{z^{\alpha}}=\Gamma(\alpha+1)\sum_{n=0}^{\infty}\frac{(-1)^n(z/2)^{2n}}{n!\Gamma(n+\alpha+1)} .$$
Since $j_{\lambda-1/2}(z)=\cos z$, $j_{\lambda+1/2}(z)=z^{-1}\sin z$. It follows that $E_0(iz)=e^{iz}$, and $\SF_{0}$ agrees with the usual Fourier transform.
We assume $\lambda>0$ in
what follows.

When u and v  satisfy the $\lambda$-Cauchy-Riemann equations:
\begin{eqnarray}\label{a c r0}
\left\{\begin{array}{ll}
                                    D_xu-\partial_y v=0,&  \\
                                    \partial_y u +D_xv=0&
                                 \end{array}\right.
\end{eqnarray}
the function F(z)=F(x,y)=u(x,y)+iv(x,y)\,(z=x+iy)\, is said to be a $\lambda$-analytic function on the upper half plane $\RR^2_+$. It is easy to see that
$F=u+iv$ is $\lambda$-analytic on $\RR^2_+$ if and only if
$$T_{\bar{z}}F\equiv0,\ \ \ \hbox{with}\,T_{\bar{z}}=\frac{1}{2}(D_x+i\partial_y).$$
If $u$ and $v$ are $C^2$ functions and satisfy\,(\ref{a c r0}), then
\begin{eqnarray}\label{1}
(\triangle_{\lambda}u)(x, y)=0,\ \ \hbox{with}\,\triangle_{\lambda}=D_x^2+ \partial_y^2.
\end{eqnarray}
A $C^2$ function $u(x, y)$ satisfying Formula\,(\ref{1}) is said to be $\lambda$-harmonic.\,In\,\cite{ZhongKai Li 3}, the
Hardy space $H^p_\lambda(\RR^2_+)$ for $p>0$ is defined to be the set of
$\lambda$-analytic functions on $\RR^2_+$ satisfying
$$\|F\|_{H^p_\lambda(\RR^2_+)}=\sup\limits_{y>0}\left\{c_{\lambda}\int_{\RR}|F(x+iy)|^p|x|^{2\lambda}dx \right\}^{1/p}<+\infty.$$
When $p>\frac{2\lambda}{2\lambda+1}$, some basic conclusions on $H_{\lambda}^p(\RR_+^2)$  are obtain  \cite{ZhongKai Li 3}, together with
the associated real Hardy space $H_{\lambda}^p(\RR)$ on the line $\RR$, the collection  of the real parts of boundary functions   of  $F\in H_{\lambda}^p(\RR_+^2)$.

 Muckenhoupt and Stein studied the  Hardy spaces associated with the Hankel transform in \cite{MS}.
 Their starting point is the generalized Cauchy-Riemann equations:
\begin{eqnarray}\label{lam-c-r}
u_x-v_y=0,\ \ \ u_y+v_x+\frac{2\lambda}{x}v=0
\end{eqnarray}
for functions $u(x,y)$, $v(x,y)$ on the quarter-plane $\{(x, y): x>0, y>0\}$. The connection with the Hankel transform is as follows. For a function $f\in L^1([0, \infty); x^{2\lambda}dx),$  let $\widehat{f}_{\lambda}(\xi)=2c_{\lambda}\int_0^{\infty}f(x)j_{\lambda-1/2}(x\xi)x^{2\lambda}dx$ be its Hankel transform, then the associated Poisson integral $u(x, y)$ and its conjugate $v(x, y)$  given by
\begin{eqnarray}\label{2}
u(x, y)=2c_\lambda\int_0^{\infty} e^{-y\xi}j_{\lambda-1/2}(x\xi)\widehat{f}_{\lambda}(\xi)\xi^{2\lambda}d\xi,
\end{eqnarray}
\begin{eqnarray}\label{3}
v(x, y)=\frac{2c_\lambda}{2\lambda+1}\int_0^{\infty} e^{-y\xi}j_{\lambda+1/2}(x\xi)x\xi\widehat{f}_{\lambda}(\xi)\xi^{2\lambda}d\xi
\end{eqnarray}
satisfy the system\,(\ref{lam-c-r}). When taking a look to the study of \cite{MS} on the whole line $\RR$, it is restricted to the case that $u$ is even in $x$
and $v$ is odd in $x$, as the functions in (\ref{2})\,and\,(\ref{3}) show, and the non-symmetry of $u$ and $v$ in (\ref{lam-c-r}) leads some ambiguous treatments in further study. It is obvious that for such $u$, $v$, the system\,(\ref{a c r0}) is consistent with\,(\ref{lam-c-r}), and if $f\in L^1_{\lambda}(\RR)$ is even, then $(\SF_{\lambda}f)(\xi)=\widehat{f}_{\lambda}(\xi)$. It is noted that most contents in \cite{MS} are about the theory related to the Gegenbauer  expansions, some of which have been extended to Jacobi expansions in\,\cite{ZhongKai Li 0},\,\cite{ZhongKai Li 1} and to Dunkl-Gegenbauer expansions in\,\cite{ZhongKai Li 2}.

We continue the work in\,\cite{ZhongKai Li 3}. In Section\,\ref{facts}, some facts about the harmonic analysis related to the Dunkl transform on $\RR$ are summarized, including the associated operations such as $\lambda$-translation, $\lambda$-convolution\,$\ast_\lambda$, $\lambda$-Poisson integral, conjugate $\lambda$-Poisson integral and also some fundamental conclusions in \cite{ZhongKai Li 3} on the Hardy space $H_{\lambda}^p(\RR_+^2)$ when $p>\frac{2\lambda}{2\lambda+1}$.

In Section\,\ref{area},   we will give a Real Characterization of  $H^p_{\lambda}(\RR^2_+)$ in Theorem\,\ref{ssst}.\,Our \textbf{main result} in Theorem\,\ref{ssst} shows that the norm of
a $\lambda$-analytic function in $H^p_{\lambda}(\RR^2_+)$  can be characterized by the real-parts which is an analog of the Burkholder-Gundy-Silverstein theorem given in\,\cite{BGS}.

As usual, $\mathcal{B}_\lambda(\RR)$ denotes the set of Borel measures $d\mu$ on $\RR$ for which $\|d\mu\|_{\mathcal{B}_\lambda}=c_\lambda\int_\RR |x|^{2\lambda}|d\mu(x)|$ is finite.  $\SD(\RR)$ designates  the space of $C^{\infty}$ functions on $\RR$ with compact support, $C_c(\RR)$  the spaces of
 continuous function on $\RR$ with compact support, and
$\SS(\RR)$ the space of  $C^{\infty}$ functions on $\RR$ rapidly decreasing together with their derivatives.
$L_{\lambda,{\rm loc}}(\RR)$ is the set of locally integrable functions on $\RR$ associated with the measure $|x|^{2\lambda}dx$. Throughout the paper, the
constants  $c_\lambda$ in\,(\ref{fourier}), $m_\lambda$, $c_\lambda'$,$c_\lambda''$, $\widetilde{c}_\lambda$ and $c_{\lambda,\alpha}$ in the subsequent sections have always the given values respectively, and $c$, $c'$, $c_p$, or $c_m$ denote constants which may be different in different occurrences. In this paper, $|x|^{2\lambda}dx$ on the real line $\RR$, denotes that $|x|^{2\lambda}dx= (x^2)^{\lambda}dx$.
If $\Omega$ is a bounded  domain on the upper half plane, we use $\partial\Omega$ to denote as the boundary of $\Omega$. Let $A$ and $B$ to be two sets on the upper half plane, $A\backslash B$ is the set $A\backslash B=\big\{(x, y)\in \RR_+^2:  (x, y)\in A,\,\hbox{and},\,(x, y)\notin B \big\}$.

\section{Basic Facts  in  Dunkl setting on the real line $\RR$}\label{facts}

\subsection{The Dunkl Operator and the Dunkl Transform}
For sake of simplicity,  we set $|E|_{\lambda}=c_\lambda\int_E |x|^{2\lambda}dx$ for a measurable set $E\subset\RR$, and $\langle f, g\rangle_\lambda=c_\lambda\int_\RR f(x)g(x) |x|^{2\lambda}dx$ whenever the integral exists. The Dunkl operator on the real line is given by
$$Df(x)=f'(x)+\frac{\lambda}{x}[f(x)-f(-x)].$$
A direct computation shows that
$$
(D^2f)(x)=f''(x)+\frac{2\lambda}{x}f'(x)-\frac{\lambda}{x^2}
[f(x)-f(-x)].
$$
For $f\in C^1(\RR)$, an inverse operator of $D$ is given by
\begin{eqnarray}\label{D-operator-inverse-1}
f(x)=f(0)+\frac{x}{2}\left(\int_{-1}^1({\rm sgn}\,s)(Df)(sx)ds+
\int_{-1}^1(Df)(sx)|s|^{2\lambda}ds\right).
\end{eqnarray}
For $f\in \SS(\RR)$, the inverse operator of $D$ can be given by
\begin{eqnarray}\label{D-operator-inverse-2}
f(x)=\frac{1}{2}\int_{-\infty}^{\infty}[\tau_x(Df)](-t)({\rm
sgn}\,t)dt,
\end{eqnarray}
and from the Formula\,$Df(x)=f'(x)+\lambda\int_{|s|\leq1}f'(xs)ds=f'(x)-\lambda\int_{|s|\geq1}f'(xs)ds$, we could know that the operator $D$ is invariant in $\SD(\RR)$ and $\SS(\RR)$.
If $f,\phi\in \SS(\RR)$, or $f\in C^1(\RR)$ with $\phi\in \SD(\RR)$,
or $f\in C_c^1(\RR)$  with $\phi\in C^{\infty}(\RR)$, we could get
\begin{eqnarray}\label{D-operator-dual-1}
\langle Df,\phi\rangle_{\lambda}=-\langle f,D\phi\rangle_{\lambda}.
\end{eqnarray}
A Laplace-type representation of the Dunkl kernel $E_\lambda(iz)$ is given by\,(\cite{ZhongKai Li 3},\cite{Ro1})
\begin{eqnarray}\label{D-kernel-3}
E_\lambda(iz)=c_{\lambda}'\int_{-1}^1e^{izt}(1+t)(1-t^2)^{\lambda-1}dt,
\ \ \ \ \ \
c'_{\lambda}=\frac{\Gamma(\lambda+1/2)}{\Gamma(\lambda)\Gamma(1/2)},
\end{eqnarray}
and $x\mapsto E_{\lambda}(ix\xi)$ is an eigenfunction of $D_x$ with eigenvalue $i\xi$\,(\cite{Du6}), that is
\begin{eqnarray}\label{D-kernel-2}
D_x[E_{\lambda}(ix\xi)]=i\xi E_{\lambda}(ix\xi), \ \ \ \ \ \
\ \ \ E_{\lambda}(ix\xi)|_{x=0}=1.
\end{eqnarray}
From Formulas\,(\ref{D-kernel-3},\ref{D-kernel-2}), an immediately case is that $e^{-y\xi}E_{\lambda}(ix\xi)$ is a $\lambda$-analytic function for a fixed given $\xi$,\  i.e., $T_{\bar{z}}e^{-y\xi}E_{\lambda}(ix\xi)=0$.

The Dunkl transform shares many  important  properties with the usual Fourier transform, part of which are listed as follows. These conclusions extend those on the Hankel transform and are special cases on the general Dunkl transform studied in\,(\cite{dJ},\cite{Du6}).

\begin{proposition}\label{D-transform-a}

{\rm(i)} \ If $f\in L_{\lambda}^1(\RR)$, then $\SF_{\lambda}f\in
C_0(\RR)$ and $\|\SF_{\lambda}f\|_{\infty}\leq\|f\|_{L_{\lambda}^1}$.

{\rm (ii)} \ {\rm (Inversion)} \ If $f\in
L_{\lambda}^1(\RR)$ such that $\SF_{\lambda}f\in L_{\lambda}^1(\RR)$, then
$f(x)=[\SF_{\lambda}(\SF_{\lambda}f)](-x)$.

{\rm(iii)} \ For $f\in\SS(\RR)$ or $f\in C_c^1(\RR)$, we have  $[\SF_{\lambda}(Df)](\xi)=i\xi(\SF_{\lambda}f)(\xi)$, $[\SF_{\lambda}(xf)](\xi)=i[D_{\xi}(\SF_{\lambda}f)](\xi)$ for $\xi\in\RR$, and $\SF_{\lambda}$ is a topological automorphism on $\SS(\RR)$.

{\rm(iv)} \ {\rm (Product Formula)} \ For all $f,g\in L_{\lambda}^1(\RR)$, we have
$\langle \SF_{\lambda}f,g\rangle_{\lambda}=\langle
f,\SF_{\lambda}g\rangle_{\lambda}$.

{\rm (v)} \ {\rm (Plancherel)} There exists a unique extension of
$\SF_{\lambda}$ to $L_{\lambda}^2(\RR)$ with
$\|\SF_{\lambda}f\|_{L_{\lambda}^2}=\|f\|_{L_{\lambda}^2}$, and
$(\SF_{\lambda}^{-1}f)(x)=(\SF_{\lambda}f)(-x)$.
\end{proposition}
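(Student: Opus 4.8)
The plan is to derive all five assertions from three structural inputs already in hand: the Laplace-type representation (\ref{D-kernel-3}), the eigenfunction relation (\ref{D-kernel-2}), and the integration-by-parts identity (\ref{D-operator-dual-1}); the only assertion requiring genuine analysis will be the inversion (ii). I would begin by recording the pointwise bound $|E_\lambda(-ix\xi)|\le1$ for real $x,\xi$: in (\ref{D-kernel-3}) the odd factor $t(1-t^2)^{\lambda-1}$ integrates to zero, so $c'_\lambda\int_{-1}^1(1+t)(1-t^2)^{\lambda-1}dt=c'_\lambda B(\tfrac12,\lambda)=1$ and $c'_\lambda(1+t)(1-t^2)^{\lambda-1}dt$ is a probability measure against which $E_\lambda(-ix\xi)$ is the average of the unimodular $e^{-ix\xi t}$. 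Part (i) then splits into two halves: the bound $\|\SF_\lambda f\|_\infty\le\|f\|_{L^1_\lambda}$ is immediate from (\ref{fourier}) and $|E_\lambda|\le1$, while continuity is dominated convergence; I defer the decay at infinity until (iii) is available.

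For (iii) I would work entirely by moving $D$ across the pairing. Writing $\SF_\lambda(Df)(\xi)=\langle Df,E_\lambda(-i\,\cdot\,\xi)\rangle_\lambda$ and applying (\ref{D-operator-dual-1}) transfers $D$ onto the kernel; since (\ref{D-kernel-2}) gives $D_xE_\lambda(-ix\xi)=-i\xi E_\lambda(-ix\xi)$, this produces $\SF_\lambda(Df)=i\xi\,\SF_\lambda f$, the rapid decay (resp. compact support) of $f$ disposing of the boundary term even though $E_\lambda$ is not itself a test function. Dually, differentiating (\ref{fourier}) in $\xi$ and using the symmetry $E_\lambda(-ix\xi)=E_\lambda(-i\xi x)$, hence $D_\xi E_\lambda(-ix\xi)=-ixE_\lambda(-ix\xi)$, gives $D_\xi\SF_\lambda f=-i\,\SF_\lambda(xf)$. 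Iterating these two rules expresses every $\xi^mD_\xi^n\SF_\lambda f$ as $\SF_\lambda$ of a Schwartz function, hence bounded by (i); granting that the seminorms $\sup|x^mD^n(\cdot)|$ generate the usual topology of $\SS(\RR)$ (as $D-\partial=\tfrac{\lambda}{x}(I-R)$ is a benign reflection term on $\SS(\RR)$), this makes $\SF_\lambda$ continuous on $\SS(\RR)$, with (ii) furnishing its inverse and hence the automorphism claim. Returning to (i): for $f\in\SD(\RR)$ the identity $\SF_\lambda(Df)=i\xi\,\SF_\lambda f$ forces $|\xi\,\SF_\lambda f(\xi)|\le\|Df\|_{L^1_\lambda}$, so $\SF_\lambda f\in C_0$, and density of $\SD(\RR)$ in $L^1_\lambda(\RR)$ together with the uniform bound extends this to all of $L^1_\lambda$.

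Assertion (iv) is a Fubini computation: $|E_\lambda|\le1$ with $f,g\in L^1_\lambda(\RR)$ makes the double integral defining $\langle\SF_\lambda f,g\rangle_\lambda$ absolutely convergent, and the symmetry of $E_\lambda(-ix\xi)$ in its two arguments reads it as $\langle f,\SF_\lambda g\rangle_\lambda$. The genuine work is the inversion (ii), which I expect to be the main obstacle since it cannot be obtained by formal manipulation of (\ref{D-kernel-3})--(\ref{D-operator-dual-1}). Here I would use Gaussian summability: insert the regulariser $e^{-\varepsilon^2\xi^2/2}$ into the candidate inverse integral, apply (iv) to rewrite the $\xi$-integral as a $\lambda$-convolution of $f$ with a dilated Dunkl heat kernel, and invoke the facts (from \cite{dJ},\cite{Ro1}) that the Gaussian is its own Dunkl transform and that the resulting kernels form an approximate identity for the measure $|x|^{2\lambda}dx$; letting $\varepsilon\to0$ yields $f=(\SF_\lambda\SF_\lambda f)(-\,\cdot\,)$ in $L^1_\lambda$, and the extra hypothesis $\SF_\lambda f\in L^1_\lambda(\RR)$ then upgrades this to the stated pointwise identity. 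Finally (v) is soft analysis: combining (ii) and (iv) on the dense subspace $\SS(\RR)$ gives the Parseval identity $\|\SF_\lambda f\|_{L^2_\lambda}=\|f\|_{L^2_\lambda}$ there, the bounded-extension theorem produces the unique isometric extension to $L^2_\lambda(\RR)$, and passing the inversion formula to the limit gives $(\SF_\lambda^{-1}f)(x)=(\SF_\lambda f)(-x)$.
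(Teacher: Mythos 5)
The paper does not prove this proposition: it is presented as a summary of known facts and attributed to de Jeu and Dunkl (\cite{dJ}, \cite{Du6}), so there is no internal proof to compare against. Your sketch is the standard argument from those references --- the probability-measure reading of (\ref{D-kernel-3}) giving $|E_\lambda(-ix\xi)|\le1$, duality via (\ref{D-operator-dual-1}) and (\ref{D-kernel-2}) for (iii) and for the Riemann--Lebesgue decay in (i), Fubini for (iv), Gaussian summability for (ii), and density plus the bounded-extension theorem for (v) --- and it is sound (in (v) note only that $\langle\cdot,\cdot\rangle_\lambda$ is bilinear, so Parseval needs the extra observation $\overline{(\SF_\lambda f)(\xi)}=(\SF_\lambda\bar f)(-\xi)$ before (iv) and (ii) are applied). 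The two facts you import without proof --- that the Gaussian is fixed by $\SF_\lambda$ and that the dilated heat kernels form an approximate identity for $|x|^{2\lambda}dx$ --- are precisely the content the paper itself outsources to the literature, so your reliance on citation is no heavier than the paper's own.
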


\begin{corollary}\label{Hausdorff-Young} {\rm (Hausdorff-Young)}
For $1\le p\le2$, $\SF_{\lambda}f$ exists when $f\in L_{\lambda}^p(\RR)$, and

$\|\SF_{\lambda}f\|_{L_{\lambda}^{p'}}\le\|f\|_{L_{\lambda}^p}$,
where $1/p+1/p'=1$.
\end{corollary}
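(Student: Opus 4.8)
The plan is to deduce the inequality by Riesz--Thorin interpolation between the two endpoints already furnished by Proposition \ref{D-transform-a}. At $p=1$, part (i) gives $\|\SF_{\lambda}f\|_{\infty}\le\|f\|_{L_{\lambda}^1}$; since $\SF_{\lambda}f\in C_0(\RR)\subset L^\infty(\RR)$, this says $\SF_{\lambda}\colon L_{\lambda}^1(\RR)\to L^\infty(\RR)$ with norm at most $1$. At $p=2$, the Plancherel statement (v) gives the isometry $\|\SF_{\lambda}f\|_{L_{\lambda}^2}=\|f\|_{L_{\lambda}^2}$, so $\SF_{\lambda}\colon L_{\lambda}^2(\RR)\to L_{\lambda}^2(\RR)$ with norm $1$. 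Both the domain and target spaces are genuine $L^p$ spaces over the single measure $d\mu_\lambda(x)=c_\lambda|x|^{2\lambda}dx$, which is $\sigma$-finite (indeed finite on compact sets, as $\lambda>0$ makes $|x|^{2\lambda}$ locally integrable), so the classical Riesz--Thorin theorem applies verbatim to the weighted spaces $L_{\lambda}^p(\RR)$.

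Before interpolating I would confirm that $\SF_{\lambda}$ is one linear operator defined consistently across the relevant spaces. The integral definition (\ref{fourier}) on $L_{\lambda}^1(\RR)$ and the Plancherel extension to $L_{\lambda}^2(\RR)$ agree on the common dense subspace $\SS(\RR)$, on which $\SF_{\lambda}$ is an automorphism by part (iii); hence they agree on $L_{\lambda}^1(\RR)\cap L_{\lambda}^2(\RR)$ and define a single operator on $L_{\lambda}^1(\RR)+L_{\lambda}^2(\RR)$. For $1\le p\le2$ every $f\in L_{\lambda}^p(\RR)$ lies in this sum: writing $f=f\chi_{\{|f|>1\}}+f\chi_{\{|f|\le1\}}$, the first summand is in $L_{\lambda}^1(\RR)$ (since $|f|\le|f|^p$ where $|f|>1$) and the second is in $L_{\lambda}^2(\RR)$ (since $|f|^2\le|f|^p$ where $|f|\le1$). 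Thus $\SF_{\lambda}f$ is unambiguously defined and the asserted inequality is meaningful.

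With endpoints $p_0=1,\ q_0=\infty,\ p_1=2,\ q_1=2$ and operator norms $M_0=M_1=1$, the interpolated exponents for $\theta\in[0,1]$ are $1/p_\theta=(1-\theta)+\theta/2=1-\theta/2$ and $1/q_\theta=\theta/2$, so that $1/p_\theta+1/q_\theta=1$, i.e. $q_\theta=p_\theta'$, while the interpolated norm is $M_0^{1-\theta}M_1^{\theta}=1$. As $\theta$ ranges over $[0,1]$ the exponent $p_\theta$ ranges over $[1,2]$, and Riesz--Thorin yields $\|\SF_{\lambda}f\|_{L_{\lambda}^{p'}}\le\|f\|_{L_{\lambda}^p}$ for every such $p$, which is exactly the claim. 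In practice I would first establish the bound on simple functions (or on $\SS(\RR)$), where the three-lines argument underlying Riesz--Thorin is transparent, and then pass to all of $L_{\lambda}^p(\RR)$ by density.

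I do not expect a genuine obstacle here: the only points requiring attention are bookkeeping ones --- verifying that the weighted measure $d\mu_\lambda$ is $\sigma$-finite so that the abstract interpolation theorem applies, and confirming that the two definitions of $\SF_{\lambda}$ coincide on the overlap $L_{\lambda}^1(\RR)\cap L_{\lambda}^2(\RR)$ via their common restriction to $\SS(\RR)$. Both are settled by the facts already recorded in Proposition \ref{D-transform-a}.
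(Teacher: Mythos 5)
Your proof is correct and follows exactly the route the paper intends: the result is stated as a corollary of Proposition \ref{D-transform-a} precisely because it follows by Riesz--Thorin interpolation between the $L_{\lambda}^1\to L^\infty$ bound of part (i) and the Plancherel isometry of part (v), which is what you carry out (the paper itself supplies no further argument). Your added checks on $\sigma$-finiteness and the consistency of the two definitions of $\SF_{\lambda}$ on $L_{\lambda}^1\cap L_{\lambda}^2$ are sound and complete the bookkeeping.
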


\subsection{The $\lambda$-Translation and the $\lambda$-Convolution}
For $x,t,z\in\RR$, we set
$W_{\lambda}(x,t,z)=W_{\lambda}^0(x,t,z)(1-\sigma_{x,t,z}+\sigma_{z,x,t}+\sigma_{z,t,x})$,
where
$$
W_{\lambda}^0(x,t,z)=\frac{c''_{\lambda}|xtz|^{1-2\lambda}\chi_{(||x|-|t||,
|x|+|t|)}(|z|)} {[((|x|+|t|)^2-z^2)(z^2-(|x|-|t|)^2)]^{1-\lambda}},
$$
$c''_{\lambda}=2^{3/2-\lambda}\big(\Gamma(\lambda+1/2)\big)^2/[\sqrt{\pi}\,\Gamma(\lambda)]$,
and  $\sigma_{x,t,z}=\frac{x^2+t^2-z^2}{2xt}$ for $x,t\in
\RR\setminus\{0\}$,
and 0 otherwise. From\,\cite{Ro1}, we have

\begin{proposition}\label{Dunkl-kernel-P}
The Dunkl kernel $E_\lambda$ satisfies the following product formula:
\begin{eqnarray}\label{product-D-1}
E_\lambda(x\xi)E_\lambda(t\xi)=\int_{\RR}E_\lambda(z\xi)d\nu_{x,t}(z),\quad
x,t\in\RR \ ~\hbox{and} ~ \ \xi\in\CC,
\end{eqnarray}
where $\nu_{x,t}$ is a signed measure given by
$d\nu_{x,t}(z)=c_{\lambda}W_\lambda(x,t,z)|z|^{2\lambda}dz$
for $x,t\in \RR\setminus\{0\}$, $d\nu_{x,t}(z)=d\delta_x(z)$ for $t=0$,
 and $d\nu_{x,t}(z)=d\delta_t(z)$ for $x=0$.
\end{proposition}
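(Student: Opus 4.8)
\emph{Sketch of the argument I would use.} The identity is a statement about the single entire function $E_\lambda$, so it suffices to prove the equivalent form $E_\lambda(ix\xi)E_\lambda(it\xi)=\int_\RR E_\lambda(iz\xi)\,d\nu_{x,t}(z)$, the version in (\ref{product-D-1}) following by analytic continuation in $\xi$. The plan is to reduce everything to the classical Gegenbauer (Sonine--Bessel) product formula. The starting point is the observation that the whole Dunkl kernel is obtained from its even part by a single first-order operator: using the definition $E_\lambda(iz)=j_{\lambda-1/2}(z)+\frac{iz}{2\lambda+1}j_{\lambda+1/2}(z)$ together with the normalized-Bessel identity $j_{\lambda-1/2}'(s)=-\frac{s}{2\lambda+1}j_{\lambda+1/2}(s)$, one checks directly that
\begin{equation*}
E_\lambda(ix\xi)=\Big(1-\tfrac{i}{\xi}\,\partial_x\Big)j_{\lambda-1/2}(x\xi).
\end{equation*}
Thus $E_\lambda(ix\xi)E_\lambda(it\xi)=\big(1-\tfrac{i}{\xi}\partial_x\big)\big(1-\tfrac{i}{\xi}\partial_t\big)\,j_{\lambda-1/2}(x\xi)\,j_{\lambda-1/2}(t\xi)$, and the task is transferred to the even-even product.

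Next I would invoke the Gegenbauer product formula for the normalized Bessel function of order $\lambda-1/2$ (legitimate since $\lambda>0$),
\begin{equation*}
j_{\lambda-1/2}(x\xi)\,j_{\lambda-1/2}(t\xi)=c_\lambda'\int_0^\pi j_{\lambda-1/2}\!\Big(\xi\sqrt{x^2+t^2-2|x||t|\cos\theta}\Big)(\sin\theta)^{2\lambda-1}\,d\theta,
\end{equation*}
whose constant is exactly the $c_\lambda'$ appearing in (\ref{D-kernel-3}). The substitution $z^2=x^2+t^2-2|x||t|\cos\theta$, $\,|z|\,d|z|=|x||t|\sin\theta\,d\theta$, together with $(\sin\theta)^2=\frac{[(|x|+|t|)^2-z^2][z^2-(|x|-|t|)^2]}{(2|x||t|)^2}$, turns the angular integral into an integral over $||x|-|t||\le|z|\le|x|+|t|$ and produces precisely the weight $|xtz|^{1-2\lambda}[((|x|+|t|)^2-z^2)(z^2-(|x|-|t|)^2)]^{\lambda-1}$, i.e. the symmetric backbone $W_\lambda^0(x,t,z)$. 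A short constant-chase confirms that the measure so obtained is $c_\lambda W_\lambda^0(x,t,z)|z|^{2\lambda}\,dz$.

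Then I would apply $\big(1-\tfrac{i}{\xi}\partial_x\big)\big(1-\tfrac{i}{\xi}\partial_t\big)$ to both sides. It is cleanest to differentiate in the angular representation, where the limits $[0,\pi]$ are inert, and use the chain rule: $\partial_x j_{\lambda-1/2}(z(\theta)\xi)=\xi\,\frac{\partial z}{\partial x}\,j_{\lambda-1/2}'(z\xi)$, with $\frac{\partial z}{\partial x}$ rational in $x,t,z$ once one substitutes the crucial relation $\cos\theta=\frac{x^2+t^2-z^2}{2|x||t|}=\operatorname{sgn}(xt)\,\sigma_{x,t,z}$. This is where the rational corrections enter. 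Each derivative converts $j_{\lambda-1/2}$ into $j_{\lambda+1/2}$ (supplying the odd part of $E_\lambda(iz\xi)$), while the double-derivative term is reduced back to $\{j_{\lambda-1/2},j_{\lambda+1/2}\}$ by the Bessel differential equation $j_{\lambda-1/2}''+\frac{2\lambda}{s}j_{\lambda-1/2}'+j_{\lambda-1/2}=0$, so that the whole integrand closes within the two Bessel functions making up $E_\lambda(iz\xi)$. Collecting the scalar coefficients should reproduce exactly the modulation $1-\sigma_{x,t,z}+\sigma_{z,x,t}+\sigma_{z,t,x}$, giving $\int_\RR E_\lambda(iz\xi)\,W_\lambda(x,t,z)|z|^{2\lambda}\,dz$. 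The degenerate cases $x=0$ or $t=0$ are handled directly, since then $E_\lambda(0)=1$, the left side collapses to a single kernel, and the measure reduces to $\delta_t$ or $\delta_x$.

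The main obstacle is the bookkeeping of this last step: one must verify, for all four sign combinations of $x$ and $t$, that the rational factors generated by $\frac{\partial z}{\partial x}$, $\frac{\partial z}{\partial t}$ and their product assemble with the correct signs into the single symmetrization factor $1-\sigma_{x,t,z}+\sigma_{z,x,t}+\sigma_{z,t,x}$ and into the coefficient $\frac{iz\xi}{2\lambda+1}$ of the odd part. This is precisely where the reflection built into $D$ shows up and where $\nu_{x,t}$ becomes a genuinely \emph{signed} measure, so no positivity shortcut is available. A secondary technical point is justifying the angular differentiation and the change of variables when $0<\lambda<1$, since $W_\lambda^0$ then blows up like $[\,\cdot\,]^{\lambda-1}$ at the endpoints $|z|=||x|-|t||$ and $|z|=|x|+|t|$; I would first carry out the computation for $\lambda\ge1$, where everything is manifestly integrable, and then extend to all $\lambda>0$ by analyticity in $\lambda$, the resulting integrals converging throughout.
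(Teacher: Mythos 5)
The paper does not actually prove this proposition: it is imported verbatim from R\"osler \cite{Ro1} (``From \cite{Ro1}, we have''), so the only fair comparison is with the literature. Your overall strategy --- writing $E_\lambda(ix\xi)=\bigl(1-\tfrac{i}{\xi}\partial_x\bigr)j_{\lambda-1/2}(x\xi)$ (the identity $j_{\lambda-1/2}'(s)=-\tfrac{s}{2\lambda+1}j_{\lambda+1/2}(s)$ is correct), invoking Gegenbauer's product formula for $j_{\lambda-1/2}$, and differentiating under the angular integral --- is a legitimate and essentially classical route, close in spirit to how the kernel $W_\lambda$ is actually derived.

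There is, however, a genuine gap at the decisive step, which you defer as ``bookkeeping.'' The coefficients do \emph{not} assemble pointwise into the factor $1-\sigma_{x,t,z}+\sigma_{z,x,t}+\sigma_{z,t,x}$. Using $\partial_x z=\sigma_{z,x,t}$, $\partial_t z=\sigma_{z,t,x}$ and the Bessel equation $j''+\tfrac{2\lambda}{s}j'+j=0$, the operator $\bigl(1-\tfrac{i}{\xi}\partial_x\bigr)\bigl(1-\tfrac{i}{\xi}\partial_t\bigr)$ applied to $j_{\lambda-1/2}(\xi z(\theta))$ produces the coefficient $1+\sigma_{z,x,t}\,\sigma_{z,t,x}$ in front of $j_{\lambda-1/2}(\xi z)$, and this rational function is \emph{not} equal to $1-\sigma_{x,t,z}+\sigma_{z,x,t}+\sigma_{z,t,x}$ (compare the $z^4$ terms after clearing denominators). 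The product formula is therefore only an identity \emph{after} integration in $\theta$: one must integrate by parts using $\tfrac{d}{d\theta}\sin^{2\lambda}\theta=2\lambda\cos\theta\sin^{2\lambda-1}\theta$ and the relation $\cos\theta=\operatorname{sgn}(xt)\,\sigma_{x,t,z}$ to trade the quadratic term $\sigma_{z,x,t}\sigma_{z,t,x}$ for the linear combination $-\sigma_{x,t,z}+\sigma_{z,x,t}+\sigma_{z,t,x}$, and similarly for the odd part. That redistribution is the substantive content of the proof, not an afterthought, and your sketch does not supply it. Two smaller points: the change of variables $\theta\mapsto z$ only covers $|z|\in(||x|-|t||,|x|+|t|)$ on one sign of $z$, so you still owe the reader the splitting of the measure over both components of its support (with the attendant factor of $2$ in the constant chase); and the analytic-continuation step from $i\xi$, $\xi\in\RR$, to $\xi\in\CC$ is fine only because $\nu_{x,t}$ has compact support, which is worth saying explicitly.
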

If $t\neq0$, for an appropriate function $f$ on $\RR$, the  $\lambda$-translation is given by
\begin{eqnarray}\label{tau-1}
(\tau_tf)(x)=c_{\lambda}\int_{\RR}f(z)W_\lambda(x,t,z)|z|^{2\lambda}dz;
\end{eqnarray}
and if $t=0$, $(\tau_0f)(x)=f(x)$.
If $(\tau_tf)(x)$ is taken as a function of $t$ for a given $x$,
we may set $(\tau_tf)(0)=f(t)$ as  a complement.
An unusual fact is that
$\tau_t$ is not a positive operator in general \cite{Ro1}.
If $(x, t)\neq(0,0)$, an equivalent form of $(\tau_tf)(x)$ is given by\,\cite{Ro1},
\begin{eqnarray}\label{tau-2}
(\tau_tf)(x)=c'_{\lambda}\int_0^\pi \bigg(f_e(\langle
x,t\rangle_\theta)+ f_o(\langle
x,t\rangle_\theta)\frac{x+t}{\langle
x,t\rangle_\theta}\bigg)(1+\cos\theta)\sin^{2\lambda-1}\theta
d\theta
\end{eqnarray}
where $x,t\in\RR$, $f_e(x)=(f(x)+f(-x))/2$,
$f_o(x)=(f(x)-f(-x))/2$, $\langle
x,t\rangle_\theta=\sqrt{x^2+t^2+2xt\cos\theta}$.
For two appropriated function $f$ and $g$, their $\lambda$-convolution
 $f\ast_{\lambda}g$ is defined by
\begin{eqnarray}\label{convolution-10}
(f\ast_{\lambda}
g)(x)=c_{\lambda}\int_{\RR}(\tau_xf)(-t)g(t)|t|^{2\lambda}dt.
\end{eqnarray}
The properties of $\tau$ and $\ast_{\lambda}$ are listed as follows\,\cite{ZhongKai Li 3}:

\begin{proposition}\label{tau-convolution-a} {\rm(i)} \ If $f\in L_{\lambda,{\rm loc}}(\RR)$,
then for all $x,t\in\RR$, $(\tau_tf)(x)=(\tau_xf)(t)$, and
$(\tau_t\tilde{f})(x)=(\widetilde{\tau_{-t}f})(x)$, where $\widetilde{f}(x)=f(-x)$.

{\rm(ii)}  \ For all $1\le p\le\infty$ and $f\in L_{\lambda}^p(\RR)$,
$\|\tau_tf\|_{L^p_{\lambda}}\le 4\|f\|_{L^p_{\lambda}}$ with $t\in\RR$, and for $1\leq p<\infty$, $\lim_{t\rightarrow0}\|\tau_tf-f\|_{L^p_{\lambda}}=0$.

{\rm(iii)} \ If $f\in L^p_\lambda(\RR)$, $1\leq p\leq2$ and $t\in\RR$, then
$[\SF_{\lambda}(\tau_t
f)](\xi)=E_{\lambda}(it\xi)(\SF_{\lambda}f)(\xi)$ for almost every $\xi\in\RR$.

{\rm(iv)} \ For measurable $f,g$ on $\RR$, if $\iint
|f(z)||g(x)||W_{\lambda}(x,t,z)||z|^{2\lambda}|x|^{2\lambda}dzdx$ is convergent, we have
  $\langle\tau_tf,g\rangle_{\lambda}=\langle
f,\tau_{-t}g\rangle_{\lambda}$. In particular,
$\ast_{\lambda}$ is commutative.

{\rm(v)} \ {\rm(Young inequality)} \ If $p,q,r\in[1,\infty]$ and
$1/p+1/q=1+1/r$, then for  $f\in L^p_{\lambda}(\RR)$, $g\in
L^q_{\lambda}(\RR)$, we have $\|f\ast_{\lambda}g\|_{L^r_{\lambda}}\leq
4\|f\|_{L^p_{\lambda}} \|g\|_{L^q_{\lambda}}$.

{\rm(vi)} \ Assume that $p,q,r\in[1,2]$, and $1/p+1/q=1+1/r$. Then for $f\in L^p_{\lambda}(\RR)$ and $g\in
L^q_{\lambda}(\RR)$,
$[\SF_{\lambda}(f\ast_{\lambda}g)](\xi)=(\SF_{\lambda}f)(\xi)(\SF_{\lambda}g)(\xi)$. In particular,
$\ast_{\lambda}$ is associative in $L^1_{\lambda}(\RR)$

{\rm(vii)} \ If $f\in L^1_{\lambda}(\RR)$ and $g\in\SS(\RR)$, then
$f\ast_{\lambda}g\in C^{\infty}(\RR)$.

{\rm(viii)} \ If $f,g\in L_{\lambda,{\rm loc}}(\RR)$, ${\rm
supp}\,f\subseteq\{x:\, r_1\le|x|\le r_2\}$ and ${\rm
supp}\,g\subseteq\{x:\,|x|\le r_3\}$,$r_2>r_1>0$, $r_3>0$, then
${\rm supp}\, (f\ast_{\lambda}g)\subseteq\{x:\,r_1-r_3\le|x|\le
r_2+r_3\}$.
\end{proposition}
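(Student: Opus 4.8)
The plan is to prove the eight assertions in a bootstrapping order: extract (i), (iv) and (viii) directly from the explicit kernel $W_\lambda$ and its symmetries, deduce the transform identities (iii) and (vi) from the product formula (\ref{product-D-1}), and obtain the analytic statements (ii), (v), (vii) from the integral representation (\ref{tau-2}). The two structural inputs are thus the closed form of $W_\lambda$ together with Proposition \ref{Dunkl-kernel-P}.

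First I would record the symmetries of $W_\lambda$. Since each $\sigma_{x,t,z}$ is symmetric in $(x,t)$ and the swap $x\leftrightarrow t$ merely interchanges $\sigma_{z,x,t}$ with $\sigma_{z,t,x}$, while $W^0_\lambda$ and the cut $\chi_{(||x|-|t||,|x|+|t|)}(|z|)$ are visibly symmetric in $(x,t)$, one gets $W_\lambda(x,t,z)=W_\lambda(t,x,z)$, hence $(\tau_tf)(x)=(\tau_xf)(t)$. Tracking signs in each $\sigma$ gives in the same way the relations $W_\lambda(-x,-t,z)=W_\lambda(x,t,-z)$ and $W_\lambda(x,t,z)=W_\lambda(z,-t,x)$; the first yields the identity $(\tau_t\tilde f)(x)=(\widetilde{\tau_{-t}f})(x)$ of (i) after the substitution $z\mapsto-z$, and the second yields, via Fubini applied to the double integral, the duality $\langle\tau_tf,g\rangle_\lambda=\langle f,\tau_{-t}g\rangle_\lambda$ of (iv), from which commutativity of $\ast_\lambda$ is immediate through (\ref{convolution-10}) and (i). For (viii) I would merely track supports: in $(\tau_xf)(-t)$ the factor $\chi_{(||x|-|t||,|x|+|t|)}(|z|)$ forces $\big||x|-|t|\big|\le|z|\le|x|+|t|$, and intersecting this with the constraints $|t|\le r_3$ and $r_1\le|z|\le r_2$ coming from $\mathrm{supp}\,g$ and $\mathrm{supp}\,f$ produces exactly the annulus $r_1-r_3\le|x|\le r_2+r_3$.

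For (iii) I would work on $\SS(\RR)$ and pass to $1\le p\le2$ by density. Using inversion (Proposition \ref{D-transform-a}(ii),(v)) and the product formula, I compute $\SF_\lambda^{-1}\!\big[E_\lambda(it\,\cdot)\,\SF_\lambda f\big](x)=c_\lambda\int_\RR E_\lambda(it\xi)(\SF_\lambda f)(\xi)E_\lambda(ix\xi)|\xi|^{2\lambda}d\xi$ and collapse $E_\lambda(it\xi)E_\lambda(ix\xi)=\int_\RR E_\lambda(iz\xi)\,d\nu_{x,t}(z)$ to recover $\int_\RR f(z)\,d\nu_{x,t}(z)=(\tau_tf)(x)$, the extension to $L^p_\lambda$ being licensed by Corollary \ref{Hausdorff-Young} and (ii). Identity (vi) then follows by writing $(f\ast_\lambda g)(x)=c_\lambda\int_\RR(\tau_{-t}f)(x)g(t)|t|^{2\lambda}dt$ via (i), taking $\SF_\lambda$ in $x$, and inserting (iii); associativity in $L^1_\lambda$ is read off from $\SF_\lambda(f\ast_\lambda g)=\SF_\lambda f\cdot\SF_\lambda g$ and the injectivity of $\SF_\lambda$.

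The main obstacle is the $L^p$ bound (ii), because $\tau_t$ is not a positive operator. Here I would set $W_\lambda$ aside in favour of the representation (\ref{tau-2}) and split $f=f_e+f_o$. The even part is integrated against the nonnegative kernel $(1+\cos\theta)\sin^{2\lambda-1}\theta$, so Minkowski's integral inequality together with the substitution governed by $\langle x,t\rangle_\theta$ bounds its $L^p_\lambda$ norm once the Jacobian is controlled. The odd part is the genuinely delicate term, since its kernel carries the extra sign-changing factor $(x+t)/\langle x,t\rangle_\theta$; I would handle it by exploiting the identity $\langle x,t\rangle_\theta^2=(x-t)^2+4xt\cos^2(\theta/2)$ to change variables and majorize the odd contribution by a positive operator of the same type, assembling the two pieces into the uniform estimate $\|\tau_tf\|_{L^p_\lambda}\le4\|f\|_{L^p_\lambda}$. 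The strong continuity $\lim_{t\to0}\|\tau_tf-f\|_{L^p_\lambda}=0$ then follows by approximating $f$ in $L^p_\lambda$ by $g\in\SS(\RR)$, for which $\tau_tg\to g$ by dominated convergence in (\ref{tau-2}), and invoking the uniform bound. Finally (v) is the standard three-line Young argument run with $(\tau_{-t}f)(x)$ in place of an ordinary translate, the constant $4$ inherited from (ii); and (vii) follows from commutativity, writing $(g\ast_\lambda f)(x)=c_\lambda\int_\RR(\tau_xg)(-t)f(t)|t|^{2\lambda}dt$ and differentiating under the integral, since for $g\in\SS(\RR)$ the representation (\ref{tau-2}) shows $x\mapsto\tau_xg$ is smooth with all derivatives dominated by Schwartz seminorms, which the factor $f\in L^1_\lambda$ integrates.
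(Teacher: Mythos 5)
You should first note that the paper contains no proof of this proposition: it is introduced by the sentence ``The properties of $\tau$ and $\ast_{\lambda}$ are listed as follows \cite{ZhongKai Li 3}'' and is imported wholesale from Li--Liao (with the product formula going back to R\"osler \cite{Ro1}). So there is no in-paper argument to compare against, and your proposal has to be judged on its own terms. The parts resting on explicit kernel symmetries are sound: one checks directly from the definition that $W_\lambda(x,t,z)=W_\lambda(t,x,z)$, that $W_\lambda(-x,-t,z)=W_\lambda(x,t,-z)$ (under either substitution $\sigma_{x,t,z}$ and $W^0_\lambda$ are invariant while $\sigma_{z,x,t}$ and $\sigma_{z,t,x}$ both change sign), and that $W_\lambda(x,t,z)=W_\lambda(z,-t,x)$ (the quartic $((|x|+|t|)^2-z^2)(z^2-(|x|-|t|)^2)$ is totally symmetric in $|x|,|t|,|z|$), which gives (i), (iv) and, by tracking the cut $\chi_{(||x|-|t||,|x|+|t|)}$, (viii). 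The derivations of (iii) and (vi) from Proposition~\ref{Dunkl-kernel-P}, Hausdorff--Young and density are the standard ones and are correct. Two minor omissions: the degenerate cases $x=0$ or $t=0$, where $\tau$ is defined through point masses, need a separate (trivial) sentence; and in (vii) the domination that licenses differentiation under the integral sign is really the estimate (\ref{D-translation-2}), itself a nontrivial quoted lemma rather than an immediate consequence of (\ref{tau-2}).

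The one genuine gap is the core of (ii). You correctly isolate the odd part as the obstruction, but ``majorize the odd contribution by a positive operator of the same type'' is exactly where all the work lives, and the identity $\langle x,t\rangle_\theta^2=(x-t)^2+4xt\cos^2(\theta/2)$ does not by itself produce the majorization: the factor $(x+t)/\langle x,t\rangle_\theta$ is unbounded near $\theta=\pi$, $x=t$, in a way that defeats a naive Minkowski argument. The missing ingredient is the elementary but decisive pointwise inequality
\begin{equation*}
(x+t)^2(1+\cos\theta)\;\le\;2\,\langle x,t\rangle_\theta^2 ,
\end{equation*}
valid for all $x,t\in\RR$ and $\theta\in[0,\pi]$ (it reduces to $(x-t)^2(\cos\theta-1)\le 0$), which shows that the odd kernel is pointwise dominated by $\sqrt{2}\,|f_o(\langle x,t\rangle_\theta)|\,(1+\cos\theta)^{1/2}\sin^{2\lambda-1}\theta$ and hence folds back into a positive averaging operator of the even type; combined with $\|f_e\|_{L^p_\lambda},\|f_o\|_{L^p_\lambda}\le\|f\|_{L^p_\lambda}$ this is how the constant $4$ actually arises as $2+2$. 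Without this (or an equivalent) step, the uniform bound in (ii) -- on which your proofs of (v) and of the strong continuity also depend -- is asserted rather than derived, so part (ii) as written is a plan, not a proof.
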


\begin{proposition}(\hbox{\cite{ZhongKai Li 3}, Corollary\,2.5(i),\,Lemma\,2.7,\,Proposition\,2.8(ii)})\label{D-translation-a}

{\rm (i)} \ If $f\in\SS(\RR)$ or $\SD(\RR)$, then for fixed
$t$, the function
$x\mapsto(\tau_tf)(x)$ is also in $\SS(\RR)$ or $\SD(\RR)$, and
\begin{eqnarray}\label{D-translation-10}
 D_t(\tau_tf(x))=D_x(\tau_tf(x))=[\tau_t(Df)](x).
\end{eqnarray}
If $f\in\SS(\RR)$ and $m>0$, a pointwise estimate is given by
\begin{eqnarray}\label{D-translation-2}
\big|(\tau_t|f|)(x)\big|\le
\frac{c_m(1+x^2+t^2)^{-\lambda}}{(1+||x|-|t||^2)^m}.\end{eqnarray}

{\rm (ii)} \ If $\phi\in L_\lambda^1(\RR)$ satisfies $(\SF_{\lambda}\phi)(0)=1$ and $\phi_{\epsilon}(x)=\epsilon^{-2\lambda-1}\phi(\epsilon^{-1}x)$ for
$\epsilon>0$, then for all $f\in X= L_{\lambda}^p(\RR)$, $ 1\leq p<\infty$, or $C_0(\RR)$, $\lim_{\epsilon\rightarrow0+}\|f\ast_{\lambda}\phi_{\epsilon}-f\|_X=0$.

\end{proposition}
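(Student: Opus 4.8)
The plan is to treat the two structural assertions of part~(i) on the Fourier side, to derive the pointwise bound (\ref{D-translation-2}) from the explicit formula (\ref{tau-2}), and to handle part~(ii) by a routine approximate-identity argument. For the first claim of (i) I would pass to the Dunkl transform: by Proposition~\ref{tau-convolution-a}(iii), $\SF_\lambda(\tau_t f)(\xi)=E_\lambda(it\xi)\,\SF_\lambda f(\xi)$, and since $\xi\mapsto E_\lambda(it\xi)$ is smooth with all $\xi$-derivatives of at most polynomial growth (it is assembled from the entire, bounded Bessel functions $j_{\lambda\mp1/2}$ times at most a linear factor), the product $E_\lambda(it\xi)\SF_\lambda f(\xi)$ lies in $\SS(\RR)$ whenever $\SF_\lambda f\in\SS(\RR)$. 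Inverting and using that $\SF_\lambda$ is a topological automorphism of $\SS(\RR)$ (Proposition~\ref{D-transform-a}(iii)) then gives $\tau_t f\in\SS(\RR)$. In the $\SD(\RR)$ case smoothness follows identically, while the compact support is read off from the support $\{z:\,||x|-|t||\le|z|\le|x|+|t|\}$ of the kernel $W_\lambda(x,t,\cdot)$ (visible in the indicator $\chi_{(||x|-|t||,\,|x|+|t|)}$ in its definition): if $\mathrm{supp}\,f\subseteq\{|z|\le R\}$, then $(\tau_t f)(x)$ vanishes unless $||x|-|t||\le R$, a compact set in $x$ for fixed $t$.

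For the intertwining identity I would combine the multiplier formula with Proposition~\ref{D-transform-a}(iii). Since $Df\in\SS(\RR)$, both sides are Schwartz and
\[
\SF_\lambda(\tau_t(Df))(\xi)=E_\lambda(it\xi)\,\SF_\lambda(Df)(\xi)=i\xi\,E_\lambda(it\xi)\SF_\lambda f(\xi)=\SF_\lambda(D_x(\tau_t f))(\xi),
\]
whence $D_x(\tau_t f)=\tau_t(Df)$ by injectivity of $\SF_\lambda$ on $\SS(\RR)$. The equality $D_t(\tau_t f(x))=\tau_t(Df)(x)$ then follows by applying this same identity after exchanging the roles of $x$ and $t$ through the symmetry $(\tau_t f)(x)=(\tau_x f)(t)$ of Proposition~\ref{tau-convolution-a}(i).

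For the pointwise estimate (\ref{D-translation-2}) I would start from (\ref{tau-2}) applied to $|f|$ and rewrite the bracketed integrand, with $u=\langle x,t\rangle_\theta$, in the form $\tfrac{1}{2u}\big[f(u)(u+x+t)+f(-u)(u-x-t)\big]$, so that it is bounded by $\big(1+\tfrac{|x+t|}{u}\big)\big(|f(u)|+|f(-u)|\big)$. Inserting the rapid decay $|f(\pm u)|\le c_N(1+u^2)^{-N}$ and performing the substitution $u=\langle x,t\rangle_\theta$ (under which $\sin^{2\lambda-1}\theta\,d\theta$ becomes, up to constants, $|xt|^{1-2\lambda}u\{[(|x|+|t|)^2-u^2][u^2-(|x|-|t|)^2]\}^{\lambda-1}du$ on $||x|-|t||\le u\le|x|+|t|$) reduces the claim to a one-dimensional weighted integral estimate. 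Because $u\ge||x|-|t||$ throughout, one factor $(1+u^2)^{-m}\le(1+||x|-|t||^2)^{-m}$ extracts the desired $(1+||x|-|t||^2)^{-m}$, and the remaining decay together with the Jacobian weight and the prefactor $|xt|^{1-2\lambda}$ is precisely what reproduces the homogeneity $(1+x^2+t^2)^{-\lambda}$. I expect this last bookkeeping---controlling the endpoint singularities of $[\,\cdots]^{\lambda-1}$ when $\lambda<1$ and separating the regimes $|x|\sim|t|$ from $|x|\not\sim|t|$ so as to land exactly on the exponent $-\lambda$---to be the main obstacle, and it is here that the explicit kernel estimates of \cite{Ro1} and \cite{ZhongKai Li 3} carry the decisive weight.

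Finally, part~(ii) is the standard approximate-identity argument. Since $E_\lambda(0)=1$, the hypothesis $(\SF_\lambda\phi)(0)=1$ says $c_\lambda\int_\RR\phi\,|x|^{2\lambda}dx=1$, and a change of variables gives $c_\lambda\int_\RR\phi_\epsilon\,|x|^{2\lambda}dx=1$ for every $\epsilon>0$. Writing, via Proposition~\ref{tau-convolution-a}(i),
\[
(f\ast_\lambda\phi_\epsilon)(x)-f(x)=c_\lambda\int_\RR\big[(\tau_{-t}f)(x)-f(x)\big]\phi_\epsilon(t)\,|t|^{2\lambda}dt,
\]
applying Minkowski's integral inequality and rescaling $t=\epsilon s$, I would bound $\|f\ast_\lambda\phi_\epsilon-f\|_X$ by $c_\lambda\int_\RR\|\tau_{-\epsilon s}f-f\|_X\,|\phi(s)|\,|s|^{2\lambda}ds$. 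The integrand tends to $0$ pointwise in $s$ by the continuity of $\lambda$-translation (Proposition~\ref{tau-convolution-a}(ii) for $X=L^p_\lambda$ with $1\le p<\infty$, and uniform continuity for $X=C_0(\RR)$) and is dominated by $5\|f\|_X\,|\phi(s)|\,|s|^{2\lambda}\in L^1$, so dominated convergence yields the conclusion.
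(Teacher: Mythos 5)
The paper does not actually prove this proposition: it is imported verbatim from \cite{ZhongKai Li 3} (Corollary 2.5(i), Lemma 2.7, Proposition 2.8(ii)), so there is no in-paper argument to compare yours against. Judged on its own terms, your treatment of the structural claims of (i) and of part (ii) is correct and essentially complete. The multiplier argument $\SF_\lambda(\tau_t f)=E_\lambda(it\cdot)\,\SF_\lambda f$ combined with the automorphism property of $\SF_\lambda$ on $\SS(\RR)$ does give $\tau_t f\in\SS(\RR)$, since for fixed $t$ the function $\xi\mapsto E_\lambda(it\xi)$ has all derivatives bounded by the Laplace representation (\ref{D-kernel-3}); the support argument for $\SD(\RR)$ read off from the indicator in $W_\lambda^0$ is right; the intertwining identity follows as you say from $\SF_\lambda(Dg)=i\xi\SF_\lambda g$ plus the symmetry $(\tau_tf)(x)=(\tau_xf)(t)$; and part (ii) is the standard approximate-identity argument, correctly executed (normalization of $\phi_\epsilon$, Minkowski, rescaling, domination by $5\|f\|_X|\phi(s)||s|^{2\lambda}$, dominated convergence). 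One small loose end there: Proposition \ref{tau-convolution-a}(ii) states the continuity $\lim_{t\to0}\|\tau_tf-f\|_{L^p_\lambda}=0$ only for $1\le p<\infty$, so for $X=C_0(\RR)$ you must supply the sup-norm continuity of $t\mapsto\tau_tf$ separately, e.g.\ from (\ref{tau-2}) and uniform continuity of $f$.

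The one genuine gap is the estimate (\ref{D-translation-2}). Your reduction is sound: the rewriting of the integrand of (\ref{tau-2}) as $\frac{1}{2u}\bigl[|f(u)|(u+x+t)+|f(-u)|(u-x-t)\bigr]$, the change of variables $u=\langle x,t\rangle_\theta$ with the stated Jacobian, and the extraction of $(1+||x|-|t||^2)^{-m}$ from part of the rapid decay using $u\ge||x|-|t||$ are all correct. But the step that actually produces the factor $(1+x^2+t^2)^{-\lambda}$ --- controlling the endpoint singularities of $\bigl\{[(|x|+|t|)^2-u^2][u^2-(|x|-|t|)^2]\bigr\}^{\lambda-1}$ when $\lambda<1$, the prefactor $|xt|^{1-2\lambda}$ near $xt=0$, and the factor $|x+t|/u$, which is tamed only by pairing it with $(1+\cos\theta)=2\cos^2(\theta/2)$ via $\langle x,t\rangle_\theta\ge|x+t||\cos(\theta/2)|$ --- is exactly the part you defer to the cited literature. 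Since this is the only analytically nontrivial assertion in the proposition, the proof is not self-contained at its decisive point: either carry out that case analysis or cite \cite{ZhongKai Li 3} for (\ref{D-translation-2}) explicitly, as the paper itself in effect does.
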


\subsection{The $\lambda$-Poisson Integral and its Conjugate Integral}
The following  results about  $\lambda$-Poisson Integral and Conjugate $\lambda$-Poisson  Integral are obtained in \cite{ZhongKai Li 3}.
For $1\leq p\leq\infty$, the $\lambda$-Poisson integral of $f\in L_\lambda^p(\RR)$ in the Dunkl setting is given by:
$$
(Pf)(x,y)=c_{\lambda}\int_{\RR}f(t)(\tau_xP_y)(-t)|t|^{2\lambda}dt,
\ \ \ \ \ \ \hbox{for} \ x\in\RR, \ y\in(0,\infty),
$$
where $(\tau_xP_y)(-t)$ is the $\lambda$-Poisson kernel with $P_y(x)=m_{\lambda}y(y^2+x^2)^{-\lambda-1}$,
$m_\lambda=2^{\lambda+1/2}\Gamma(\lambda+1)/\sqrt{\pi}$. Similarly,  the $\lambda$-Poisson integral for  $d\mu\in {\frak
B}_{\lambda}(\RR)$ can be given by $
(P(d\mu))(x,y)=c_{\lambda}\int_{\RR}(\tau_xP_y)(-t)|t|^{2\lambda}d\mu(t). $

\begin{proposition}\cite{ZhongKai Li 3}\label{Poisson-a} {\rm(i)} \
$\lambda$-Poisson kernel $(\tau_xP_y)(-t)$ can be represented by
\begin{eqnarray}\label{D-Poisson-ker-11}
(\tau_xP_y)(-t)=
\frac{\lambda\Gamma(\lambda+1/2)}{2^{-\lambda-1/2}\pi}\int_0^\pi\frac{y(1+{\rm
sgn}(xt)\cos\theta)
}{\big(y^2+x^2+t^2-2|xt|\cos\theta\big)^{\lambda+1}}\sin^{2\lambda-1}\theta
d\theta.
\end{eqnarray}

{\rm(ii)} \
The Dunkl transform of the function $P_y(x)$ is $(\SF_{\lambda}P_y)(\xi)=e^{-y|\xi|}$, and
$
(\tau_xP_y)(-t)=c_{\lambda}\int_{\RR}e^{-y|\xi|}E_{\lambda}(ix\xi)E_{\lambda}(-it\xi)|\xi|^{2\lambda}d\xi.
$
\end{proposition}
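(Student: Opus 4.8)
The plan is to handle the two assertions in reverse, deducing the explicit kernel in (i) from the translation formula (\ref{tau-2}) and then recovering (ii) as a Fourier-analytic reformulation. For part (i), I would substitute $f=P_y$ into the explicit form of the $\lambda$-translation in (\ref{tau-2}). Since $P_y(x)=m_\lambda y(y^2+x^2)^{-\lambda-1}$ depends only on $x^2$ it is even, so its odd part $(P_y)_o$ vanishes and only $(P_y)_e=P_y$ survives. Evaluating $(\tau_xP_y)(-t)$ via (\ref{tau-2}) with evaluation point $-t$ and translation parameter $x$, the argument becomes $\langle -t,x\rangle_\theta=\sqrt{x^2+t^2-2xt\cos\theta}$, so that
$$(\tau_xP_y)(-t)=c'_\lambda m_\lambda\int_0^\pi \frac{y\,(1+\cos\theta)}{(y^2+x^2+t^2-2xt\cos\theta)^{\lambda+1}}\sin^{2\lambda-1}\theta\,d\theta.$$
When $xt>0$ this already matches (\ref{D-Poisson-ker-11}), since then $2xt=2|xt|$ and $\operatorname{sgn}(xt)=1$. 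The case $xt<0$ is reconciled by the substitution $\theta\mapsto\pi-\theta$, under which $\cos\theta\mapsto-\cos\theta$ while $\sin^{2\lambda-1}\theta$ is unchanged; this simultaneously flips the cross term to $-2|xt|\cos\theta$ and turns $1+\cos\theta$ into $1-\cos\theta=1+\operatorname{sgn}(xt)\cos\theta$, which is exactly why the factors $\operatorname{sgn}(xt)$ and $|xt|$ appear. It then remains to verify the constant: using $c'_\lambda=\Gamma(\lambda+1/2)/[\Gamma(\lambda)\Gamma(1/2)]$, $m_\lambda=2^{\lambda+1/2}\Gamma(\lambda+1)/\sqrt\pi$ and $\Gamma(\lambda+1)=\lambda\Gamma(\lambda)$, one finds $c'_\lambda m_\lambda=\lambda\Gamma(\lambda+1/2)/(2^{-\lambda-1/2}\pi)$, matching the prefactor in (\ref{D-Poisson-ker-11}).

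For part (ii), I would first establish $(\SF_{\lambda}P_y)(\xi)=e^{-y|\xi|}$. Because $P_y$ is even, the odd component $\frac{iz}{2\lambda+1}j_{\lambda+1/2}(z)$ of the Dunkl kernel integrates to zero against it, so $\SF_{\lambda}P_y$ reduces to the even Hankel-type integral $2c_\lambda\int_0^\infty P_y(x)j_{\lambda-1/2}(x\xi)x^{2\lambda}\,dx$. I would evaluate this by subordination, writing $e^{-y|\xi|}=\pi^{-1/2}\int_0^\infty u^{-1/2}e^{-u}e^{-y^2\xi^2/(4u)}\,du$ and using that the Dunkl transform of a Gaussian is again a Gaussian; inverting the Gaussian superposition and matching normalizations then identifies the integrand as $P_y$. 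Once $(\SF_{\lambda}P_y)(\xi)=e^{-y|\xi|}$ is in hand, the integral representation of $(\tau_xP_y)(-t)$ follows: by Proposition\,\ref{tau-convolution-a}(iii) one has $[\SF_{\lambda}(\tau_xP_y)](\xi)=E_{\lambda}(ix\xi)e^{-y|\xi|}$, and applying the inversion theorem of Proposition\,\ref{D-transform-a}(ii) yields $(\tau_xP_y)(-t)=c_{\lambda}\int_{\RR}e^{-y|\xi|}E_{\lambda}(ix\xi)E_{\lambda}(-it\xi)|\xi|^{2\lambda}\,d\xi$, as claimed.

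I expect the main obstacle to be the rigorous evaluation of the Hankel-type integral giving $e^{-y|\xi|}$, that is, justifying the subordination/Gaussian argument together with the interchange of integrations, and checking the integrability and decay of $P_y$ and $\tau_xP_y$ required to legitimately apply the inversion theorem. By contrast, the sign bookkeeping in part (i) is elementary once the reflection $\theta\mapsto\pi-\theta$ is invoked, and the verification of the constant is purely mechanical.
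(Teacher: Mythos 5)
The paper does not prove this proposition at all: it is imported verbatim from \cite{ZhongKai Li 3} as background material in Section\,\ref{facts}, so there is no internal proof to compare yours against. On its own merits your argument is correct and is the standard derivation: in (i) the specialization of (\ref{tau-2}) to the even function $P_y$, the reflection $\theta\mapsto\pi-\theta$ to produce ${\rm sgn}(xt)$ and $|xt|$, and the constant check $c'_\lambda m_\lambda=\lambda\Gamma(\lambda+1/2)/(2^{-\lambda-1/2}\pi)$ all go through; in (ii) the subordination of $e^{-y|\xi|}$ over Gaussians, Proposition\,\ref{tau-convolution-a}(iii), and the inversion formula of Proposition\,\ref{D-transform-a}(ii) are legitimately applicable since $\tau_xP_y\in L^1_\lambda(\RR)$ and $|E_\lambda(is)|\le1$ makes $\xi\mapsto E_\lambda(ix\xi)e^{-y|\xi|}$ integrable against $|\xi|^{2\lambda}d\xi$.
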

Relating to $P_y(x)$, the conjugate $\lambda$-Poisson integral for $f\in L^p_{\lambda}(\RR)$ is given by:
$$
(Qf)(x,y)=c_{\lambda}\int_{\RR}f(t)(\tau_xQ_y)(-t)|t|^{2\lambda}dt,
\ \ \ \ \ \ \hbox{for} \ x\in\RR, \ y\in(0,\infty),
$$
where $(\tau_xQ_y)(-t)$ is the conjugate $\lambda$-Poisson kernel with $Q_y(x)=m_{\lambda}x(y^2+x^2)^{-\lambda-1}$.

\begin{proposition}\cite{ZhongKai Li 3}\label{conjugate-Poisson-a} {\rm(i)} \ \
The conjugate  $\lambda$-Poisson kernel $(\tau_xQ_y)(-t)$ can be represented by
\begin{eqnarray*}\label{D-conjugate-Poisson-ker-1}
(\tau_xQ_y)(-t)=
\frac{\lambda\Gamma(\lambda+1/2)}{2^{-\lambda-1/2}\pi}\int_0^\pi\frac{(x-t)(1+{\rm
sgn}(xt)\cos\theta)
}{\big(y^2+x^2+t^2-2|xt|\cos\theta\big)^{\lambda+1}}\sin^{2\lambda-1}\theta
d\theta.
\end{eqnarray*}

{\rm(ii)} \  The Dunkl transform of $Q_y(x)$ is given by  $(\SF_{\lambda}Q_y)(\xi)=-i({\rm
sgn}\,\xi)e^{-y|\xi|}$, for $\xi\neq0$, and
$
(\tau_xQ_y)(-t)=-ic_{\lambda}\int_{\RR}({\rm
sgn}\,\xi)e^{-y|\xi|}E_{\lambda}(ix\xi)E_{\lambda}(-it\xi)|\xi|^{2\lambda}d\xi.
$
\end{proposition}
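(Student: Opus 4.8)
The plan is to establish (i) by direct substitution into the explicit $\lambda$-translation formula\,(\ref{tau-2}), and (ii) by transporting the known transform of $P_y$ through the elementary identity $xP_y(x)=yQ_y(x)$, after which the kernel representation drops out of the translation rule and Dunkl inversion.

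For part (i) I would apply\,(\ref{tau-2}) to $f=Q_y(x)=m_\lambda x(y^2+x^2)^{-\lambda-1}$ with translation parameter $x$ and evaluation point $-t$. Since $Q_y$ is odd, its even part $f_e$ vanishes and only the $f_o=Q_y$ term survives, giving
\begin{eqnarray*}
(\tau_x Q_y)(-t)=c'_\lambda\int_0^\pi Q_y\big(\langle -t,x\rangle_\theta\big)\,\frac{x-t}{\langle -t,x\rangle_\theta}\,(1+\cos\theta)\sin^{2\lambda-1}\theta\,d\theta,
\end{eqnarray*}
where $\langle -t,x\rangle_\theta=\sqrt{x^2+t^2-2xt\cos\theta}$. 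Inserting the explicit $Q_y$ cancels the factor $\langle -t,x\rangle_\theta$ and leaves the integrand $m_\lambda(x-t)(y^2+x^2+t^2-2xt\cos\theta)^{-\lambda-1}(1+\cos\theta)$. On the set $xt<0$ the substitution $\theta\mapsto\pi-\theta$ sends $-2xt\cos\theta$ to $-2|xt|\cos\theta$ and $(1+\cos\theta)$ to $(1+{\rm sgn}(xt)\cos\theta)$, while for $xt>0$ the two forms already coincide; this produces the stated kernel, the constant matching because $c'_\lambda m_\lambda=\lambda\Gamma(\lambda+1/2)/(2^{-\lambda-1/2}\pi)$ by a short computation using $\Gamma(\lambda+1)=\lambda\Gamma(\lambda)$.

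For part (ii) I would combine $xP_y(x)=yQ_y(x)$ with the rule $[\SF_\lambda(xf)](\xi)=i[D_\xi(\SF_\lambda f)](\xi)$ of Proposition\,\ref{D-transform-a}(iii). Because $\SF_\lambda P_y=e^{-y|\xi|}$ is even, the reflection part of $D_\xi$ annihilates it and $D_\xi e^{-y|\xi|}=\partial_\xi e^{-y|\xi|}=-y\,({\rm sgn}\,\xi)\,e^{-y|\xi|}$; hence $y\,\SF_\lambda Q_y=\SF_\lambda(xP_y)=-iy\,({\rm sgn}\,\xi)\,e^{-y|\xi|}$, so $(\SF_\lambda Q_y)(\xi)=-i\,({\rm sgn}\,\xi)\,e^{-y|\xi|}$. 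The kernel identity then follows by applying the translation rule $\SF_\lambda(\tau_{-t}Q_y)(\xi)=E_\lambda(-it\xi)(\SF_\lambda Q_y)(\xi)$ from Proposition\,\ref{tau-convolution-a}(iii), inverting through Proposition\,\ref{D-transform-a}(ii) — the transform on the right is dominated by $e^{-y|\xi|}$ (as $|E_\lambda(-it\xi)|\le1$) and hence lies in $L^1_\lambda$ — and finally invoking the symmetry $(\tau_{-t}Q_y)(x)=(\tau_x Q_y)(-t)$ of Proposition\,\ref{tau-convolution-a}(i).

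The main obstacle is rigor at the level of function classes. Since $Q_y$ decays only like $|x|^{-2\lambda-1}$, one has $|x|^{2\lambda}Q_y\sim|x|^{-1}$ near infinity, so $Q_y\notin L^1_\lambda$ (it does lie in $L^2_\lambda$), and $P_y$ is neither Schwartz nor compactly supported; thus Proposition\,\ref{D-transform-a}(iii) does not apply verbatim. I expect to need a regularization — for instance multiplying by a Gaussian factor $e^{-\varepsilon\xi^2}$ on the frequency side, or approximating $P_y$ within $\SS(\RR)$ — together with dominated-convergence estimates, for which the pointwise bound\,(\ref{D-translation-2}) is convenient, both to justify the differentiation identity for $\SF_\lambda(xP_y)$ and to legitimize the inversion step for the $L^2_\lambda$-function $\tau_{-t}Q_y$ whose transform is integrable. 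Once these limiting arguments are secured, the computations above carry over unchanged.
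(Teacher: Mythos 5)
The paper does not prove this proposition at all: it is imported verbatim from \cite{ZhongKai Li 3} (Propositions 3.5--3.6 there), so there is no internal proof to compare against. Judged on its own terms, your argument is essentially correct. In (i), the reduction of\,(\ref{tau-2}) to the odd part of $Q_y$, the cancellation of $\langle -t,x\rangle_\theta$, the $\theta\mapsto\pi-\theta$ reflection that converts $-2xt\cos\theta$ and $1+\cos\theta$ into $-2|xt|\cos\theta$ and $1+{\rm sgn}(xt)\cos\theta$, and the constant check $c'_\lambda m_\lambda=\lambda\Gamma(\lambda+1/2)/(2^{-\lambda-1/2}\pi)$ all verify. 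In (ii), the identity $xP_y=yQ_y$ together with $D_\xi e^{-y|\xi|}=-y({\rm sgn}\,\xi)e^{-y|\xi|}$ gives the stated transform, and the kernel formula follows from Proposition\,\ref{tau-convolution-a}(i),(iii) with $p=2$ plus inversion, exactly as you say.

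You correctly identify the only genuine gap: $Q_y\notin L^1_\lambda(\RR)$ and $P_y\notin\SS(\RR)$, so neither the differentiation rule of Proposition\,\ref{D-transform-a}(iii) nor the $L^1$ inversion of Proposition\,\ref{D-transform-a}(ii) applies verbatim, and $\SF_\lambda Q_y$ must be taken in the Plancherel sense. Your regularization plan closes this, but a cleaner route that avoids differentiating the transform altogether is to go in the reverse direction: define $G(x)=-ic_\lambda\int_\RR({\rm sgn}\,\xi)e^{-y|\xi|}E_\lambda(ix\xi)|\xi|^{2\lambda}d\xi$, compute it in closed form using the Laplace-type representation\,(\ref{D-kernel-3}) of $E_\lambda$ and Fubini (legitimate since $e^{-y|\xi|}|\xi|^{2\lambda}\in L^1$), verify $G=Q_y$, and then read off $\SF_\lambda Q_y=-i({\rm sgn}\,\xi)e^{-y|\xi|}$ from $L^2_\lambda$ uniqueness; the translated kernel identity then follows as in your last step, with a.e.\ equality upgraded to everywhere by continuity of both sides. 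Either way the proposition stands; your write-up just needs one of these two limiting arguments carried out in full.
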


Then we can define the associated maximal functions as
$$(P^*_{\nabla}f)(x)=\sup_{|s-x|<y}|(Pf)(s,y)|,\ \ \
(P^*f)(x)=\sup_{y>0}|(Pf)(x,y)|,$$
$$(Q^*_{\nabla}f)(x)=\sup_{|s-x|<y}|(Qf)(s,y)|,\ \ \ (Q^*f)(x)=\sup_{y>0}|(Qf)(x,y)|.$$

\begin{proposition}\cite{ZhongKai Li 3}\label{Poisson-conjugate-CR}
{\rm (i)}    For $f\in L^p_{\lambda}(\RR)$, $1\le p<\infty$,  $u(x,y)=(Pf)(x,y)$ and  $v(x,y)=(Qf)(x,y)$
on $\RR^2_+$ satisfy the $\lambda$-Cauchy-Riemann equations\,(\ref{a c r0}), and are both $\lambda$-harmonic on $\RR^2_+$.\\
{\rm (ii)}\rm(semi-group property)  If $f\in L^p_{\lambda}(\RR)$, $1\leq p\leq \infty$, and $y_0>0$, then
$(Pf)(x,y_0+y)=P[(Pf)(\cdot,y_0)](x,y), \ \hbox{for} \ y>0.$\\
{\rm (iii)}  If $1 < p < \infty$, then there exists some constant $A_p'$ for any $f \in
L^p_{\lambda}(\RR)$, $\|(Q^*_{\nabla})f\|_{L^p_{\lambda}} \le A_p'
\|f\|_{L^p_{\lambda}}$.\\
{\rm (iv)}   $P^*_{\nabla}$ and $P^*$ are both $(p,p)$ type for $1<p
\leq\infty$ and weak-$(1,1)$ type.\\
{\rm (v)}    If $d\mu\in {\frak B}_{\lambda}(\RR)$, then
$\|(P(d\mu))(\cdot,y)\|_{L_{\lambda}^1}\le\|d\mu\|_{{\frak
B}_{\lambda}}$ as $y\rightarrow0+$, $[P(d\mu)](\cdot,y)$
converges $*$-weakly to $d\mu$: If $f\in X=L_{\lambda}^p(\RR)$, $1\le
p<\infty$, or $C_0(\RR)$, then $\|(Pf)(\cdot,y)\|_X\le\|f\|_X$ and
$\lim_{y\rightarrow0+}\|(Pf)(\cdot,y)-f\|_{X}=0$.\\
{\rm (vi)}  If $f\in L^p_{\lambda}(\RR)$, $1\le p\le2$, and
$[\SF_{\lambda}(Pf(\cdot,y))](\xi)=e^{-y|\xi|}(\SF_{\lambda}f)(\xi)$,
and
\begin{eqnarray}\label{D-Poisson-2}
(Pf)(x,y)=c_{\lambda}\int_{\RR}e^{-y|\xi|}(\SF_{\lambda}f)(\xi)E_{\lambda}(ix\xi)|\xi|^{2\lambda}d\xi,\quad
(x,y)\in\RR^2_+,
\end{eqnarray}
furthermore, Formula\,(\ref{D-Poisson-2}) is true when we replace $f\in L^p_{\lambda}(\RR)$ with $d\mu\in {\frak
B}_{\lambda}(\RR)$.
\end{proposition}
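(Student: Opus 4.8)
The plan is to establish the six assertions in increasing order of difficulty: the transform-side identities (vi) and (ii) and the $\lambda$-analyticity (i) follow mechanically from the representations already recorded, while the genuine work lies in the maximal estimates (iii)--(iv). First I would dispose of (vi) and (ii). Since $(Pf)(\cdot,y)=f\ast_\lambda P_y$ by definition, Proposition~\ref{tau-convolution-a}\,(vi) combined with $(\SF_\lambda P_y)(\xi)=e^{-y|\xi|}$ from Proposition~\ref{Poisson-a}\,(ii) gives $\SF_\lambda[(Pf)(\cdot,y)](\xi)=e^{-y|\xi|}(\SF_\lambda f)(\xi)$, and the inversion formula of Proposition~\ref{D-transform-a}\,(ii) yields the integral representation \,(\ref{D-Poisson-2}); the measure version is identical from the definition of $P(d\mu)$. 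For (ii), the same computation shows $(\SF_\lambda P_{y_0+y})(\xi)=e^{-(y_0+y)|\xi|}=(\SF_\lambda P_{y_0})(\xi)(\SF_\lambda P_y)(\xi)$, hence $P_{y_0+y}=P_{y_0}\ast_\lambda P_y$; associativity of $\ast_\lambda$ (Proposition~\ref{tau-convolution-a}\,(vi)) then rewrites $f\ast_\lambda P_{y_0+y}=(f\ast_\lambda P_{y_0})\ast_\lambda P_y$, with the passage to $1\le p\le\infty$ justified by Young's inequality and a Fubini argument.

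For (i) I would argue at the level of the kernel. The remark following \,(\ref{D-kernel-2}) records that $T_{\bar z}[e^{-y\xi}E_\lambda(ix\xi)]=0$ for each fixed $\xi$; feeding this into the representations of Propositions~\ref{Poisson-a}\,(ii) and \ref{conjugate-Poisson-a}\,(ii) shows that $(\tau_xP_y)(-t)+i(\tau_xQ_y)(-t)$, as a function of $(x,y)$, is annihilated by $T_{\bar z}$. Integrating against $f(t)|t|^{2\lambda}dt$ and differentiating under the integral sign (legitimate because of the factor $e^{-y|\xi|}$) gives $T_{\bar z}(Pf+iQf)=0$, i.e. the Cauchy--Riemann system \,(\ref{a c r0}); the $\lambda$-harmonicity of both $u$ and $v$ is then the implication \,(\ref{1}) once their $C^2$ smoothness is read off the same representation. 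For (v) I would note that $P_y(x)=y^{-2\lambda-1}P_1(x/y)$ with $(\SF_\lambda P_1)(0)=1$, so $\{P_y\}$ is precisely an approximate identity of the type in Proposition~\ref{D-translation-a}\,(ii); this delivers $\lim_{y\to0+}\|(Pf)(\cdot,y)-f\|_X=0$, while the contraction $\|(Pf)(\cdot,y)\|_X\le\|f\|_X$ uses the nonnegativity of the kernel visible in Proposition~\ref{Poisson-a}\,(i) together with $c_\lambda\int_\RR(\tau_xP_y)(-t)|t|^{2\lambda}dt=1$, and the $\ast$-weak convergence for $d\mu\in{\frak B}_\lambda(\RR)$ is the dual statement.

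The real content is in (iv) and especially (iii). For (iv) the plan is the pointwise majorization $(P^*_\nabla f)(x)\le c\,(M_\lambda f)(x)$, where $M_\lambda$ denotes the Hardy--Littlewood maximal operator associated with the measure $|x|^{2\lambda}dx$; this follows by splitting $|f|\ast_\lambda P_y$ into dyadic annuli and using the decay of the Poisson kernel furnished by the translation estimate \,(\ref{D-translation-2}), after which the known $(p,p)$ bounds ($1<p\le\infty$) and the weak-$(1,1)$ bound for $M_\lambda$ close the argument. The hard part will be (iii), the $L^p$-boundedness of the nontangential maximal function $Q^*_\nabla$ of the \emph{conjugate} Poisson integral: since $Q_y$ is neither positive nor dominated by $M_\lambda$, I expect to first prove the $L^p$-boundedness of the conjugate-function operator---the Dunkl analogue of the Hilbert transform, identified on the transform side by the multiplier $-i\,{\rm sgn}\,\xi$ (Proposition~\ref{conjugate-Poisson-a}\,(ii))---via Calder\'on--Zygmund theory adapted to $|x|^{2\lambda}dx$, and then transfer this bound to the nontangential maximal function using the harmonicity and the Cauchy--Riemann equations from (i) together with the semigroup property (ii). Controlling this conjugate maximal function over the full range $1<p<\infty$ is the principal difficulty, as it encodes the boundedness of the Dunkl--Hilbert transform and requires the full singular-integral machinery in the $|x|^{2\lambda}dx$ setting.
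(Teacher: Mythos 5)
There is nothing in the paper to compare your proposal against: Proposition~\ref{Poisson-conjugate-CR} is stated as imported background, credited to \cite{ZhongKai Li 3}, and the paper supplies no proof of any of its six parts. So the only question is whether your sketch would stand on its own. Its architecture is the standard one and matches how these facts are established in the cited source: (vi) and (ii) from the multiplier identities $(\SF_\lambda P_y)(\xi)=e^{-y|\xi|}$ and the convolution theorem, (i) from $T_{\bar z}\bigl[e^{-y\xi}E_\lambda(ix\xi)\bigr]=0$ applied under the integral sign to the kernel $(\tau_xP_y)(-t)+i(\tau_xQ_y)(-t)$, (v) from the dilation structure $P_y(x)=y^{-2\lambda-1}P_1(x/y)$ together with Proposition~\ref{D-translation-a}\,(ii) and the nonnegativity and unit mass of the kernel, and (iv)--(iii) from a maximal-function majorization plus the conjugate-function operator. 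I see no step that is wrong in principle.

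The genuine gap is that the two parts you yourself identify as the real content are left as programs, not proofs. For (iv), the pointwise bound $(P^*_\nabla f)(x)\le c\,(M_\lambda f)(x)$ is not automatic from \,(\ref{D-translation-2}) (which is stated for $f\in\SS(\RR)$, not for $P_y$); it requires the explicit two-sided size estimate for $(\tau_xP_y)(-t)$ coming from the representation \,(\ref{D-Poisson-ker-11}), and in particular one must handle the region $|t|\approx -|x|$ (the reflected singularity), which is where the Dunkl setting genuinely differs from the classical one. For (iii), everything hinges on the $L^p_\lambda$-boundedness of the Dunkl--Hilbert transform, i.e.\ on verifying that the kernel $\lim_{y\to0+}(\tau_xQ_y)(-t)$ satisfies Calder\'on--Zygmund size and smoothness conditions relative to the doubling measure $|x|^{2\lambda}dx$; you assert this should follow from "the full singular-integral machinery" but give no indication of how the H\"ormander condition is checked for a kernel that again has a second singularity at $t=-x$ with a different (integrable) strength. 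Until those two kernel estimates are actually carried out, (iii) and (iv) --- and hence the proposition --- are not proved; the remaining parts (i), (ii), (v), (vi) are complete modulo routine justification of differentiation under the integral sign.
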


\subsection{The Theory of the Hardy space $H_{\lambda}^p(\RR_+^2)$ }
 The main results about the Hardy spaces $H_{\lambda}^p(\RR_+^2)$ obtained in\,\cite{ZhongKai Li 3} are listed as the following  Theorems\,\ref{majorization-2}--\ref{p5} in this section.

\begin{theorem} \label{majorization-2} {\rm ($\lambda$-harmonic majorization)}
For $F\in H^p_\lambda(\RR^2_+)$ and $p_0\le
p<\infty$ with $p_0=\frac{2\lambda}{2\lambda+1}$, there exists a nonnegative function $g\in
L^{p/p_0}_{\lambda}(\RR)$ such that for $(x, y)\in\RR^2_+$
\begin{align*}
& |F(x,y)|^{p_0}\leq (Pg)(x,y), \\
& \|F\|^{p_0}_{H^{p}_{\lambda}}\le\|g\|_{L^{p/p_0}_{\lambda}} \le
c_p\|F\|^{p_0}_{H^{p}_{\lambda}},
\end{align*}
where $c_p=2^{1-p_0+p_0/p}$ for $p_0\le p<1$ and $c_p=2$ for $p\ge1$.
\end{theorem}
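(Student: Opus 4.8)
The plan is to realize $g$ as the boundary density of a least $\lambda$-harmonic majorant of $|F|^{p_0}$, exploiting that at the critical exponent $p_0=\frac{2\lambda}{2\lambda+1}$ this power of a $\lambda$-analytic function is $\lambda$-subharmonic. The guiding heuristic is that $(\RR^2_+,|x|^{2\lambda}dx\,dy)$ mimics Euclidean space of ``dimension'' $n=2\lambda+2$, for which the Stein--Weiss threshold $\frac{n-2}{n-1}$ equals exactly $p_0$; this is what singles out $p_0$ as the correct exponent.

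First I would remove the reflection built into $D_x$. Since $D_xR=-RD_x$ for the reflection $(Rh)(x)=h(-x)$, one verifies that $G(x,y):=\overline{F(-x,y)}$ is again $\lambda$-analytic with $|G(x,y)|=|F(-x,y)|$, so that $S:=(|F|^2+|G|^2)^{p_0/2}$ is even in $x$; consequently the nonlocal part $-\frac{\lambda}{x^2}(S-\check S)$ of $\triangle_\lambda$ vanishes and $\triangle_\lambda S$ reduces to $\triangle^{(\lambda)}S$ with $\triangle^{(\lambda)}=\partial_x^2+\partial_y^2+\frac{2\lambda}{x}\partial_x$. Writing $g_0=|F|^2+|G|^2=\sum_i\phi_i^2$ with $\phi=(u_F,v_F,u_G,v_G)$ and using (\ref{1}) to get $\triangle^{(\lambda)}\phi_i=\frac{\lambda}{x^2}(\phi_i-\check\phi_i)$, the chain rule gives $\triangle_\lambda S=\frac{p_0}{2}g_0^{p_0/2-1}\triangle^{(\lambda)}g_0+\frac{p_0}{2}(\frac{p_0}{2}-1)g_0^{p_0/2-2}|\nabla g_0|^2$, where $\triangle^{(\lambda)}g_0$ carries a nonnegative reflection contribution. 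The hard part is the sign: the naive Cauchy--Schwarz bound $|\nabla g_0|^2\le 4g_0|\nabla\phi|^2$ is too lossy and only yields subharmonicity for $p_0\ge1$, so one must instead feed in the gradient relations coming from the $\lambda$-Cauchy--Riemann equations (\ref{a c r0}) to sharpen the constant; the sharpening suffices precisely when $p_0=\frac{2\lambda}{2\lambda+1}$. This differential inequality is the main obstacle; everything else is soft. Zeros of $(F,G)$ are handled by working with $g_0+\varepsilon$ and letting $\varepsilon\to0+$.

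Granting that $S$ is $\lambda$-subharmonic, I pass to the majorant. By the semi-group property (Proposition \ref{Poisson-conjugate-CR}(ii)) and the maximum principle, for fixed $y_0>0$ the $\lambda$-harmonic function $P[S(\cdot,y_0)]$ dominates $S(\cdot,\cdot+y_0)$, whence $|F(x,y+y_0)|^{p_0}\le S(x,y+y_0)\le(P[S(\cdot,y_0)])(x,y)$. As $F\in H^p_\lambda$ and $p/p_0\ge1$, the family $\{S(\cdot,y_0)\}_{y_0>0}$ is bounded in $L^{p/p_0}_\lambda$; for $p/p_0>1$ I extract a weak limit $S(\cdot,y_0)\rightharpoonup g\ge0$ and pass to the limit under the pointwise-bounded Poisson kernel to obtain $|F(x,y)|^{p_0}\le(Pg)(x,y)$. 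At the endpoint $p=p_0$ the family is only $L^1_\lambda$-bounded and the limit is a priori a measure in $\mathcal B_\lambda(\RR)$; here I would invoke the $*$-weak convergence statement in Proposition \ref{Poisson-conjugate-CR}(v) to identify the majorant and argue, as usual for least harmonic majorants, that it is carried by a nonnegative density $g\in L^1_\lambda$.

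Finally the two-sided norm estimate. For the lower bound, from $|F(\cdot,y)|^{p_0}\le(Pg)(\cdot,y)$ and the contraction $\|(Pg)(\cdot,y)\|_{L^{p/p_0}_\lambda}\le\|g\|_{L^{p/p_0}_\lambda}$ (Proposition \ref{Poisson-conjugate-CR}(v)) one reads off $\|F(\cdot,y)\|^{p_0}_{L^p_\lambda}=\||F(\cdot,y)|^{p_0}\|_{L^{p/p_0}_\lambda}\le\|g\|_{L^{p/p_0}_\lambda}$, and taking $\sup_y$ yields $\|F\|^{p_0}_{H^p_\lambda}\le\|g\|_{L^{p/p_0}_\lambda}$. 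For the upper bound I use weak lower semicontinuity of the $L^{p/p_0}_\lambda$ norm on the approximants $S(\cdot,y_0)$ together with $\|G(\cdot,y_0)\|_{L^p_\lambda}=\|F(\cdot,y_0)\|_{L^p_\lambda}$: when $p\ge1$ the triangle inequality in $L^{p/p_0}_\lambda$ gives the clean factor $c_p=2$, while for $p_0\le p<1$ the quasi-norm inequalities $(a+b)^{p/2}\le a^{p/2}+b^{p/2}$ and $a^{p_0}+b^{p_0}\le 2^{1-p_0}(a+b)^{p_0}$ are what generate the $p$-dependent constant $c_p=2^{1-p_0+p_0/p}$. Collecting, $\|g\|_{L^{p/p_0}_\lambda}\le c_p\|F\|^{p_0}_{H^p_\lambda}$, which completes the proof.
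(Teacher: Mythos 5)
First, a point of reference: the paper does not prove this theorem at all. It appears in Section 2.4 among ``the main results \dots obtained in \cite{ZhongKai Li 3}'' and is simply quoted, so there is no in-paper proof to compare yours against; your attempt has to be judged against the standard argument, which is the one in \cite{ZhongKai Li 3}. Your outline is that argument: symmetrize via $G(x,y)=\overline{F(-x,y)}$ (your check that $D_xR=-RD_x$ makes $G$ again $\lambda$-analytic is correct, as is the observation that the reflection part of $\triangle_\lambda$ applied to the even function $g_0=|F|^2+|G|^2$ contributes the nonnegative quantity $\frac{2\lambda}{x^2}\bigl[(u-\check u)^2+(v-\check v)^2\bigr]$), prove that $S=g_0^{p_0/2}$ is $\lambda$-subharmonic, and then run the semigroup/weak-compactness machine to produce $g$. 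The heuristic identification of $p_0$ with the Stein--Weiss threshold $\frac{n-2}{n-1}$ for $n=2\lambda+2$ is also the right way to think about why this exponent appears.

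The genuine gap is that the one step you label ``the main obstacle'' is the entire analytic content of the theorem, and you do not carry it out. Everything after ``granting that $S$ is $\lambda$-subharmonic'' is indeed soft, but the subharmonicity at the critical exponent is not: one must replace the lossy bound $|\nabla g_0|^2\le 4g_0|\nabla\phi|^2$ by an inequality exploiting that, by (\ref{a c r0}), $\partial_x u=\partial_yv-\frac{\lambda}{x}(u-\check u)$ and $\partial_xv=-\partial_yu-\frac{\lambda}{x}(v-\check v)$, so the Jacobian of $(u,v)$ is symmetric and traceless only up to the nonlocal terms; these must be absorbed against the nonnegative reflection contribution to $\triangle^{(\lambda)}g_0$ and the $\frac{2\lambda}{x}\partial_x$ term, and it is exactly this bookkeeping that produces $\frac{2\lambda}{2\lambda+1}$ rather than the two-dimensional value $0$ or the naive value $1$. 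Saying ``the sharpening suffices precisely when $p_0=\frac{2\lambda}{2\lambda+1}$'' restates the theorem rather than proving it. Two smaller gaps: the comparison $S(x,y+y_0)\le \bigl(P[S(\cdot,y_0)]\bigr)(x,y)$ requires a maximum principle for $\lambda$-subharmonic functions on the unbounded half-plane, which needs the decay $|F(x,y)|\le cy^{-(2\lambda+1)/p}\|F\|_{H^p_\lambda}$ of Theorem \ref{analytic-character-1}(iv) (or an exhaustion argument) to be legitimate; and at the endpoint $p=p_0$ the weak-$*$ limit is a priori only a measure in $\mathcal{B}_\lambda(\RR)$, and your appeal to ``as usual for least harmonic majorants'' does not explain why its singular part vanishes, which is what the assertion $g\in L^{1}_\lambda(\RR)$ requires.
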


\begin{theorem} \label{ss}
For $p\ge p_0 =\frac{2\lambda}{2\lambda+1}$ and $F\in H_{\lambda}^p(\RR^2_+)$. Then\\
{\rm (i)} \  For a.e. $x\in\RR$, $\lim F(t,y)=F(x)$ exists as $(t,y)$
tends to $(x,0)$ nontangentially.\\
{\rm (ii)} \
If $F=0$ in a subset  $E\subset\RR$ with $|E|_{\lambda}>0$ symmetric about $x=0$, then $F\equiv0$.\\
{\rm (iii)} \  If $p>p_0$, then
$\lim_{y\rightarrow0+}\|F(\cdot,y)-F\|_{L^p_{\lambda}}=0$,
and $\|F\|_{H^{p}_{\lambda}}\approx\left(c_{\lambda}\int_{\RR}|F(x)|^{p}|x|^{2\lambda}dx\right)^{1/p}$.\\
{\rm (iv)} \  For $p >p_0$, $F\in H_\lambda^{p}(\RR^2_+)$ if and only if
$F^*_{\nabla}\in L^{p}_{\lambda}(\RR)$ and
   $ \|F\|_{H^{p}_{\lambda}}\approx \|F^*_{\nabla}\|_{L^p_{\lambda}}$, where   $F_{\nabla}^*(x)=\sup_{|x-u|<y}|F(u, y)|$ is the non-tangential maximal functions of $F$.\\
{\rm (v)} \   If $p >p_0$ and $p_1\geq p_0$, $F(x,y)\in H^{p}_{\lambda}(\RR^2_+)$
and $F(x) \in L^{p_1}_{\lambda}(\RR)$, then $F\in
H^{p_1}_{\lambda}(\RR^2_+)$.
\end{theorem}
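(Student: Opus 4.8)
The plan is to make the $\lambda$-harmonic majorization of Theorem~\ref{majorization-2} the central engine: it converts every pointwise question about the $\lambda$-analytic function $F$ into a question about the $\lambda$-Poisson integral $Pg$ of the nonnegative $g\in L^{p/p_0}_\lambda(\RR)$, where now the exponent $p/p_0\ge 1$ lies in the favorable range covered by the maximal-function and convergence results of Proposition~\ref{Poisson-conjugate-CR}. I would prove (i) first, since the nontangential boundary limit drives (iii), (iv) and (v); part (ii) is then a separate uniqueness statement.

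For (i), note that the majorization $|F(x,y)|^{p_0}\le (Pg)(x,y)$ gives, on taking suprema over cones, $(F^*_\nabla(x))^{p_0}\le (P^*_\nabla g)(x)$, so that Proposition~\ref{Poisson-conjugate-CR}(iv) yields $F^*_\nabla\in L^p_\lambda$ when $p>p_0$ (and weak control of $(F^*_\nabla)^{p_0}$ when $p=p_0$, since then $p/p_0=1$). To produce the limit itself I would argue in two steps: first establish nontangential convergence of $Pg$ to $g$ a.e.\ for $g\in L^{q}_\lambda$, $q\ge1$, from the maximal bound of Proposition~\ref{Poisson-conjugate-CR}(iv) together with the density statement of Proposition~\ref{D-translation-a}(ii) and the $L^q$-convergence of Proposition~\ref{Poisson-conjugate-CR}(v) by the standard maximal/density scheme; then transfer this to $F$. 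When $p\ge1$ the transfer is immediate: $F(\cdot,y_0)\in L^p_\lambda$, the semigroup property of Proposition~\ref{Poisson-conjugate-CR}(ii) represents $u(\cdot,y_0+y)$ and $v(\cdot,y_0+y)$ as $\lambda$-Poisson integrals of their traces at height $y_0$, and one lets $y\to0$ and then $y_0\to0$ with $F^*_\nabla$ as dominating function. When $p_0\le p<1$ the function $|F|^{p_0}$ is $\lambda$-subharmonic and is dominated by the harmonic $Pg$, so the a.e.\ nontangential limit of $F$ is deduced from that of its harmonic majorant by a subharmonicity comparison; this is precisely where the majorization is indispensable.

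Granting (i), the remaining parts follow by comparison with a Poisson integral. For (iii), with $p>p_0$ the domination $|F(x,y)-F(x)|\le 2F^*_\nabla(x)\in L^p_\lambda$ and the a.e.\ convergence from (i) give $\|F(\cdot,y)-F\|_{L^p_\lambda}\to0$ by dominated convergence; the norm equivalence then follows from Fatou's lemma on one side and $\|F(\cdot,y)\|_{L^p_\lambda}\le\|F\|_{H^p_\lambda}$ on the other. For (iv) the inequality $|F(x,y)|\le F^*_\nabla(x)$ (take $u=x$) gives $\|F\|_{H^p_\lambda}\le\|F^*_\nabla\|_{L^p_\lambda}$ trivially, while the reverse is exactly the maximal estimate derived in the proof of (i). For (v) I would use that the least $\lambda$-harmonic majorant of $|F|^{p_0}$ equals the $\lambda$-Poisson integral of its boundary trace $|F(\cdot)|^{p_0}$; since $F(\cdot)\in L^{p_1}_\lambda$ with $p_1/p_0\ge1$, writing $|F(x,y)|\le (P[|F|^{p_0}](x,y))^{1/p_0}$ and invoking the boundedness of $P$ on $L^{p_1/p_0}_\lambda$ (Proposition~\ref{Poisson-conjugate-CR}(v)) gives $\|F(\cdot,y)\|_{L^{p_1}_\lambda}\le\|F(\cdot)\|_{L^{p_1}_\lambda}$ uniformly in $y$, hence $F\in H^{p_1}_\lambda$. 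Part (ii) I would treat by a $\log$-subharmonicity argument: the majorization forces $\log|F(\cdot)|$ to be integrable against the $\lambda$-Poisson kernel, so vanishing of $F(\cdot)$ on a set $E$ of positive $\lambda$-measure would make this integral diverge to $-\infty$ unless $F\equiv0$; the symmetry of $E$ about $x=0$ is what lets one treat the reflected half on an equal footing, matching the reflection part of the Dunkl operator $D$.

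I expect the genuine obstacle to be the existence of the nontangential limit in (i) for the range $p_0\le p<1$. There $F(\cdot,y)$ need not be locally integrable, so the Poisson-representation route used for $p\ge1$ is unavailable, and the non-integer power $p_0$ obstructs a direct argument on $F$ itself; the subharmonicity of $|F|^{p_0}$ together with its explicit harmonic majorant must be leveraged carefully. Moreover the reflection structure of the $\lambda$-translation (Proposition~\ref{tau-convolution-a}) means the cone and covering arguments underlying the maximal theorem have to be verified in the Dunkl metric-measure setting rather than quoted from the Euclidean theory.
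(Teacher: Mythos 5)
First, a point of comparison: this paper does not prove Theorem~\ref{ss} at all. It is one of the results (Theorems~\ref{majorization-2}--\ref{p5}) that the paper explicitly imports from \cite{ZhongKai Li 3}, so there is no in-paper proof to measure your argument against; what follows is an assessment of your proposal on its own terms.

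Your architecture --- use the $\lambda$-harmonic majorization of Theorem~\ref{majorization-2} to pass from $F$ to $Pg$ with $g\in L^{p/p_0}_\lambda$, deduce $(F^*_\nabla)^{p_0}\le (P^*_\nabla g)$ and hence the maximal estimate in (iv), then obtain (iii) and (v) by dominated convergence and Poisson domination --- is the standard Burkholder--Gundy--Silverstein/Fefferman--Stein route and is sound as far as it goes. Two steps, however, are genuinely incomplete. First, in (i) for $p_0\le p<1$ you deduce the existence of the nontangential limit of $F$ ``from that of its harmonic majorant by a subharmonicity comparison.'' That inference does not work: $|F|^{p_0}\le Pg$ together with a.e.\ nontangential convergence of $Pg$ only shows that $F$ is nontangentially \emph{bounded} at a.e.\ $x$; to upgrade nontangential boundedness of the $\lambda$-harmonic functions $u,v$ to nontangential \emph{convergence} you need a local Fatou theorem for $\lambda$-harmonic functions in the Dunkl metric-measure setting, a substantial separate ingredient that you neither state nor prove. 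Second, your treatment of (ii) via $\log$-subharmonicity of $|F|$ is not available here: the paper itself stresses (in the first proposition of Section~\ref{area}) that products of $\lambda$-analytic functions --- already $F^2$ --- fail to be $\lambda$-analytic, and correspondingly $\log|F|$ has no reason to be $\lambda$-subharmonic; the hypothesis that $E$ is symmetric about $x=0$ signals that the intended mechanism exploits the reflection structure (pairing $F(x,y)$ with $F(-x,y)$) rather than a Jensen/Blaschke-type integrability of $\log|F|$. As written, (i) in the range $p_0\le p<1$ and (ii) are asserted rather than proved.
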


\begin{theorem}\label{analytic-character-1}
{\rm (i)} \    If $1\le p<\infty$ and  $F=u+iv\in H_{\lambda}^p(\RR^2_+)$, then $F$
is the $\lambda$-Poisson integral of its boundary values $F(x)$, and $F(x)\in
L^p_{\lambda}(\RR)$.\\
{\rm (ii)} \  If $1\le p\le2$, then the $\lambda$-Poisson integral of a function in $L^{p}_{\lambda}(\RR)$ is in $H^p_{\lambda}(\RR^2_+)$ if
and  only if the Dunkl transform of the function vanishes on $(-\infty, 0)$.\\
{\rm (iii)} \  If $p_0\le p\le1$ and $F(x,y)\in H_{\lambda}^p(\RR^2_+)$,
then $F$ has the following representation
\begin{eqnarray}\label{D-Poisson-5}
F(x,y)=c_{\lambda}\int_0^{\infty}e^{-y\xi}\phi(\xi)E_{\lambda}(ix\xi)|\xi|^{2\lambda}d\xi,\ \ \ \ (x, y)\in\RR_+^2,
\end{eqnarray}
where $\phi$ is a continuous function on $\RR$ satisfying $\phi(\xi)=0$ for $\xi\in(-\infty, 0]$ and that for each $y>0$, the function
$\xi\rightarrow e^{-y|\xi|}\phi(\xi)$ is bounded on $\RR$ and in  $L^{1}_{\lambda}(\RR)$.\\
{\rm (iv)} \  For $F\in H^p_{\lambda}(\RR^2_+)$, $p\ge p_0$,
there is a constant $c>0$, such that  $|F(x,y)|\leq
cy^{-(2\lambda+1)/p}\|F\|_{H^p_\lambda}$ for  $y>0$.\\
{\rm (v)} \  If $p_0<p\le2$,  $F\in H^p_{\lambda}(\RR^2_+)$, then $\int_0^{\infty}|(\SF_{\lambda}F)(\xi)|^p|\xi|^{(2\lambda+1)(p-2)+2\lambda}d\xi\le
c\|F\|_{H_{\lambda}^p}^p,$  where $c$ is a constant independent on $F$.\\
{\rm (vi)} \ For $F\in H^p_{\lambda}(\RR^2_+)$, where $p_0<p<1$ and $k\ge p$,  we could have $$\int_0^{\infty}|(\SF_{\lambda}F)(\xi)|^k|\xi|^{(2\lambda+1)(k-1-k/p)+2\lambda}d\xi\le
c\|F\|_{H_{\lambda}^p}^p,$$  $|(\SF_{\lambda}F)(\xi)|\le
c\xi^{(1+2\lambda)(p^{-1}-1)}\|F\|_{H_{\lambda}^p}$, and $(\SF_{\lambda}F)(\xi)=o\left(\xi^{(1+2\lambda)(p^{-1}-1)}\right)$
as $\xi\rightarrow+\infty$.
\end{theorem}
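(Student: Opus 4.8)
The plan is to reorganize the six assertions according to their logical dependencies rather than their stated order, establishing the pointwise growth bound (iv) first, deducing the spectral representations (i)--(iii) from it, and finally extracting the weighted transform estimates (v)--(vi); the engine throughout is the $\lambda$-harmonic majorization of Theorem~\ref{majorization-2}, which converts membership in $H^p_\lambda(\RR^2_+)$ into a Poisson-integral domination to which the convolution machinery of Section~\ref{facts} applies. First I would prove (iv): by Theorem~\ref{majorization-2} there is $g\ge0$ in $L^{p/p_0}_\lambda(\RR)$ with $|F(x,y)|^{p_0}\le(Pg)(x,y)$ and $\|g\|_{L^{p/p_0}_\lambda}\le c_p\|F\|^{p_0}_{H^p_\lambda}$. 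Writing $(Pg)(x,y)=c_\lambda\int_\RR g(t)(\tau_xP_y)(-t)|t|^{2\lambda}dt$ and applying H\"older's inequality with exponents $p/p_0$ and $q=(p/p_0)'$, together with the translation bound $\|\tau_xP_y\|_{L^q_\lambda}\le4\|P_y\|_{L^q_\lambda}$ of Proposition~\ref{tau-convolution-a}(ii), reduces everything to the scaling identity $\|P_y\|_{L^q_\lambda}=y^{-(2\lambda+1)p_0/p}\|P_1\|_{L^q_\lambda}$, which follows from $P_y(x)=y^{-(2\lambda+1)}P_1(x/y)$ by a change of variables. This yields $|F(x,y)|^{p_0}\le cy^{-(2\lambda+1)p_0/p}\|F\|^{p_0}_{H^p_\lambda}$, hence (iv).

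For (ii) I would argue spectrally: using the representation $(Pf)(x,y)=c_\lambda\int_\RR e^{-y|\xi|}(\SF_\lambda f)(\xi)E_\lambda(ix\xi)|\xi|^{2\lambda}d\xi$ of Proposition~\ref{Poisson-conjugate-CR}(vi) and applying $T_{\bar z}=\tfrac12(D_x+i\partial_y)$ under the integral sign, the eigenfunction relation (\ref{D-kernel-2}) inserts the factor $i\xi-i|\xi|$, which is $0$ for $\xi>0$ and $2i\xi$ for $\xi<0$; thus $Pf$ is $\lambda$-analytic --- and so a candidate for $H^p_\lambda$ --- exactly when $(\SF_\lambda f)(\xi)=0$ on $(-\infty,0)$, the norm control being furnished by Proposition~\ref{Poisson-conjugate-CR}(v) and uniqueness by the inversion and Plancherel statements of Proposition~\ref{D-transform-a}. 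For (i), when $p\ge1$ the uniform bound $\|F(\cdot,y)\|_{L^p_\lambda}\le\|F\|_{H^p_\lambda}$ gives, via weak (for $p>1$) or weak-$*$ (for $p=1$) compactness, a boundary function $F(x)\in L^p_\lambda(\RR)$; combining the semigroup property of Proposition~\ref{Poisson-conjugate-CR}(ii) with the convergence in Proposition~\ref{Poisson-conjugate-CR}(v) and the nontangential limits of Theorem~\ref{ss}(i),(iii) identifies $F$ as the $\lambda$-Poisson integral of $F(x)$. For (iii) I would use (iv) with interpolation: the $L^\infty$ bound from (iv) and the $L^p_\lambda$ membership from majorization place $F(\cdot,y)\in L^1_\lambda(\RR)\cap L^2_\lambda(\RR)$ for each $y>0$; the computation in (ii) forces $\SF_\lambda[F(\cdot,y)]$ to be supported in $[0,\infty)$, and the semigroup property forces $\SF_\lambda[F(\cdot,y)](\xi)=e^{-y\xi}\phi(\xi)$ for a single $\phi$ independent of $y$. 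Continuity of $\phi$ and its vanishing on $(-\infty,0]$ follow since $\SF_\lambda$ maps $L^1_\lambda$ into $C_0$ (Proposition~\ref{D-transform-a}(i)), while boundedness and integrability of $e^{-y|\xi|}\phi$ are read off at level $y/2$, giving the representation (\ref{D-Poisson-5}).

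Finally, for (v) and (vi) I would run a Hardy--Littlewood argument driven by an $L^2$-growth estimate. Interpolating the $L^p_\lambda$ boundary bound against the $L^\infty$ bound of (iv) gives $\|F(\cdot,y)\|_{L^2_\lambda}\le c\|F\|_{H^p_\lambda}y^{-(2\lambda+1)(1/p-1/2)}$, which by Plancherel (Proposition~\ref{D-transform-a}(v)) reads $\int_0^\infty e^{-2y\xi}|\phi(\xi)|^2\xi^{2\lambda}d\xi\le c\|F\|^2_{H^p_\lambda}y^{-(2\lambda+1)(2/p-1)}$; the case $p=2$ of (v) is exactly Plancherel. The pointwise bound in (vi) I would obtain from the companion interpolation $\|F(\cdot,y)\|_{L^1_\lambda}\le\|F(\cdot,y)\|_{L^p_\lambda}^p\|F(\cdot,y)\|_\infty^{1-p}\le c\|F\|_{H^p_\lambda}y^{-(2\lambda+1)(1/p-1)}$, combined with $|\phi(\xi)|\le e^{y\xi}\|F(\cdot,y)\|_{L^1_\lambda}$ and the choice $y=1/\xi$; the $o(\cdot)$ refinement then comes by approximating $F$ in a dense subclass for which the estimate is manifestly $o(\cdot)$. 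The weighted integral bounds would then follow by decomposing $(0,\infty)$ into dyadic frequency blocks $\xi\sim2^j$, controlling the $L^2$ mass on each block by the growth estimate sampled at $y\sim2^{-j}$, and passing from $L^2$ to $L^p$ (resp.\ $L^k$) on each block by H\"older; the exponents $(2\lambda+1)(p-2)+2\lambda$ and $(2\lambda+1)(k-1-k/p)+2\lambda$ are precisely those for which this bookkeeping balances.

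The hard part will be (v) and (vi). For $p<1$ there is neither a Hausdorff--Young inequality nor a Blaschke/outer factorization allowing one to pass to a power $F^{p/2}\in H^2_\lambda$, so the sharp high-frequency decay of $\SF_\lambda F$ cannot be obtained from size estimates alone: the naive dyadic bound makes each block merely $O(1)$, so that genuine summability must be extracted from the cancellation encoded in the $L^2$-growth estimate through a real-variable Hardy--Littlewood argument (for instance by integrating the scale-by-scale estimates against $dy/y$ rather than sampling them). Verifying that the Dunkl-specific features --- the non-positivity of $\tau_t$, the weight $|\xi|^{2\lambda}$, and the Bessel asymptotics hidden in $E_\lambda$ --- do not obstruct this passage, and that all constants stay uniform in $y$, is where the principal technical effort lies.
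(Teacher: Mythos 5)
The first thing to note is that this paper does not prove Theorem~\ref{analytic-character-1} at all: it appears in Section~\ref{facts} among the results ``obtained in \cite{ZhongKai Li 3}'' and is used purely as quoted background, so there is no in-paper proof to compare yours against. Judged on its own, your outline follows what is essentially the standard route: harmonic majorization plus H\"older against the $\lambda$-Poisson kernel gives the pointwise growth bound (iv); the spectral formula of Proposition~\ref{Poisson-conjugate-CR}(vi) together with the eigenvalue relation (\ref{D-kernel-2}) and injectivity of $\SF_\lambda$ on $L^1_\lambda$ gives (ii); weak (or weak-$*$) compactness, the semigroup property and Theorem~\ref{ss}(iii) give (i); and (iii) follows by placing $F(\cdot,y)\in L^1_\lambda\cap L^2_\lambda$ via (iv) and using the semigroup property to produce a single $\phi$ with $\SF_\lambda[F(\cdot,y)](\xi)=e^{-y\xi}\phi(\xi)$. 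These parts are sound, modulo routine justification of differentiating under the integral sign in (ii); note also that for $p=1$ you do not need any F.\,and\,M.\,Riesz-type step, since the $L^1_\lambda$-convergence $F(\cdot,y)\to F$ is already supplied by the quoted Theorem~\ref{ss}(iii) because $1>p_0$.

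The one place where your write-up stops short of a proof is (v)--(vi), and you say so yourself: sampling the $L^2_\lambda$-growth estimate at $y\sim 2^{-j}$ makes every dyadic block $O(1)$, so nothing is summable. But the ``integration against $dy/y$'' you gesture at is precisely Theorem~\ref{p5}(iii), which the paper also quotes: take $l=2$, $k=p$ there, rewrite $\|F(\cdot,y)\|^2_{L^2_\lambda}$ by Plancherel as $\int_0^\infty e^{-2y\xi}|\phi(\xi)|^2\xi^{2\lambda}d\xi$, restrict to $\xi\in[1/y,2/y]$, and apply H\"older on each block exactly as in your bookkeeping; the finiteness of the $y$-integral is what restores summability over $j$ and closes (v). The general $k\ge p$ case of (vi) then follows by writing $|\phi|^k=|\phi|^{k-p}|\phi|^p$ and absorbing $|\phi|^{k-p}$ with the pointwise bound. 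Likewise the $o(\cdot)$ refinement in (vi) does not need your density argument, which would be delicate since for $p<1$ the space is not normed and density of nice functions in $H^p_\lambda$ is itself a theorem: Theorem~\ref{p5}(ii) with $l=1$ gives $\|F(\cdot,y)\|_{L^1_\lambda}=o\big(y^{-(1/p-1)(2\lambda+1)}\big)$ directly, and feeding this into your bound $|\phi(\xi)|\le e\,\|F(\cdot,1/\xi)\|_{L^1_\lambda}$ finishes it. In short, the ``principal technical effort'' you anticipate is available off the shelf in the quoted Theorem~\ref{p5}; without invoking it (or reproving it), your argument for (v)--(vi) remains genuinely incomplete.
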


\begin{theorem} \label{p5}
{\rm (i)} \  For $\frac{2\lambda}{2\lambda+1}< p< l\leq+\infty$,   $\delta=\frac{1}{p}-\frac{1}{l}$,  $F(x, y)\in H^p_{\lambda}(\RR^2_+)$,
\begin{eqnarray}\label{i 2}
\left(\int_{\RR}|F(x,y)|^{l}|x|^{2\lambda}dx \right)^{\frac{1}{l}} \leq cy^{-\delta(1+2\lambda)}\|F\|_{H_{\lambda}^p(\RR_+^2)}.
\end{eqnarray}
{\rm (ii)} \ For $\frac{2\lambda}{2\lambda+1}< p< l\leq+\infty$,   $\delta=\frac{1}{p}-\frac{1}{l}$,
 $F(x, y)\in H^p_{\lambda}(\RR^2_+)$, $$\|F(.,y)\|_{L_{\lambda}^l}=o\left(y^{-\delta(2\lambda+1)}\right),\ \ \hbox{as}\,\ y\rightarrow0+.$$
{\rm (iii)} \ For $\frac{2\lambda}{2\lambda+1}< p< l\leq+\infty$, $p\leq k<\infty$,  $\delta=\frac{1}{p}-\frac{1}{l}$,
and $F(x, y)\in H^p_{\lambda}(\RR^2_+)$,
\begin{eqnarray}\label{i 1}
\left(\int_0^{+\infty} y^{k\delta(1+2\lambda)-1}\left(\int_{\RR}|F(x,y)|^{l}|x|^{2\lambda}dx \right)^{\frac{k}{l}}dy \right)^{\frac{1}{k}}\leq c\|F\|_{H_{\lambda}^p(\RR_+^2)}.
\end{eqnarray}
\end{theorem}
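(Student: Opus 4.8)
The plan is to handle the three parts in increasing order of difficulty, reducing each one to a statement about genuine $\lambda$-Poisson integrals of $L^q_\lambda$ functions with $q>1$ by means of the $\lambda$-harmonic majorization of Theorem \ref{majorization-2}.

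For (i) the case $l=\infty$ is exactly the pointwise bound $|F(x,y)|\le cy^{-(2\lambda+1)/p}\|F\|_{H^p_\lambda}$ of Theorem \ref{analytic-character-1}(iv). For $p<l<\infty$ I would interpolate this sup-bound against the trivial bound $c_\lambda\int_\RR|F(x,y)|^p|x|^{2\lambda}dx\le\|F\|_{H^p_\lambda}^p$ coming from the very definition of the Hardy norm: writing $|F|^l=|F|^{l-p}|F|^p$ and extracting $\|F(\cdot,y)\|_{L^\infty}^{\,l-p}$ gives
$$c_\lambda\int_\RR|F(x,y)|^l|x|^{2\lambda}dx\le\big(cy^{-(2\lambda+1)/p}\|F\|_{H^p_\lambda}\big)^{l-p}\|F\|_{H^p_\lambda}^p,$$
and taking the $1/l$-th power converts the exponent $(2\lambda+1)(l-p)/(pl)$ into $(2\lambda+1)\delta$, which is precisely (i).

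For (ii) I would first observe that everything reduces to ordinary Poisson integrals. By Theorem \ref{majorization-2} there is $g\ge0$ in $L^{p/p_0}_\lambda$ with $|F(x,y)|^{p_0}\le(Pg)(x,y)$ and $\|g\|_{L^{p/p_0}_\lambda}\le c_p\|F\|_{H^p_\lambda}^{p_0}$; setting $q=p/p_0>1$ and $s=l/p_0$ this yields $\|F(\cdot,y)\|_{L^l_\lambda}\le\|(Pg)(\cdot,y)\|_{L^s_\lambda}^{1/p_0}$ with $1<q<s\le\infty$. Since $(2\lambda+1)(1/q-1/s)/p_0=(2\lambda+1)\delta$, it suffices to prove the little-$o$ Poisson estimate $\|(Pg)(\cdot,y)\|_{L^s_\lambda}=o\big(y^{-(2\lambda+1)(1/q-1/s)}\big)$ as $y\to0+$ for every $g\in L^q_\lambda$. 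Its $O$-version is Young's inequality (Proposition \ref{tau-convolution-a}(v)) combined with the scaling $\|P_y\|_{L^r_\lambda}=cy^{-(2\lambda+1)(1-1/r)}$, which follows from $P_y(x)=y^{-(2\lambda+1)}P_1(x/y)$. To upgrade $O$ to $o$ I would approximate $g$ in $L^q_\lambda$ by $g_0\in L^q_\lambda\cap L^s_\lambda$ (say bounded with compact support), use the Poisson contraction $\|(Pg_0)(\cdot,y)\|_{L^s_\lambda}\le\|g_0\|_{L^s_\lambda}$ of Proposition \ref{Poisson-conjugate-CR}(v), and split $g=(g-g_0)+g_0$ so that $y^{(2\lambda+1)(1/q-1/s)}\|(Pg)(\cdot,y)\|_{L^s_\lambda}$ is bounded by $c\|g-g_0\|_{L^q_\lambda}$ plus a term that vanishes as $y\to0+$; letting $g_0\to g$ then gives (ii).

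For (iii) the same majorization, after substituting $m=k/p_0\ge q$ and checking the identity $k\delta(2\lambda+1)=m\beta$ with $\beta=(2\lambda+1)(1/q-1/s)$, reduces the claim to the single scale-invariant mixed-norm estimate
$$\int_0^\infty y^{m\beta-1}\,\|(Pg)(\cdot,y)\|_{L^s_\lambda}^m\,dy\le c\,\|g\|_{L^q_\lambda}^m,\qquad 1<q<s\le\infty,\ \ m\ge q,$$
after which tracking constants back through Theorem \ref{majorization-2} gives $\|g\|_{L^q_\lambda}^m\le c\|F\|_{H^p_\lambda}^k$ and hence the stated bound $c\|F\|_{H^p_\lambda}$. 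This mixed-norm estimate is the heart of the matter and the step I expect to be the main obstacle: a dilation computation shows the inequality is \emph{exactly} scale invariant, so naively inserting the pointwise bound $\|(Pg)(\cdot,y)\|_{L^s_\lambda}\le cy^{-\beta}\|g\|_{L^q_\lambda}$ produces the divergent integral $\int_0^\infty y^{-1}dy$ and is useless — the estimate is genuinely of Hardy--Littlewood--Sobolev type. I would establish it either by real (Marcinkiewicz) interpolation of the sublinear map $g\mapsto\big(y\mapsto(Pg)(\cdot,y)\big)$ into the mixed-norm space $L^m\big((0,\infty),y^{m\beta-1}dy;L^s_\lambda\big)$ between non-critical exponent pairs, or, more concretely, by dominating $(Pg)(x,y)$ pointwise by local $\lambda$-averages of $g$ and invoking the Hardy--Littlewood maximal bound in the Dunkl weight; in either route the smoothing gain $s>q$ is precisely what defeats the logarithmic divergence at the critical power.
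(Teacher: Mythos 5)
A preliminary remark: the paper itself contains no proof of Theorem \ref{p5}. It is stated in Section \ref{facts} among the results quoted from \cite{ZhongKai Li 3} (``The main results about the Hardy spaces $H_{\lambda}^p(\RR_+^2)$ obtained in \cite{ZhongKai Li 3} are listed as the following Theorems \ref{majorization-2}--\ref{p5}''), so there is no in-paper argument to compare yours against; I can only judge the proposal on its own merits.

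On those merits it is essentially right, and parts (i) and (ii) are complete. For (i), the convexity step $|F(x,y)|^{l}\le\|F(\cdot,y)\|_{\infty}^{l-p}|F(x,y)|^{p}$ combined with Theorem \ref{analytic-character-1}(iv) and the definition of the $H^p_{\lambda}$ norm gives exactly the exponent $\delta(2\lambda+1)$ after taking $l$-th roots. For (ii), the reduction through Theorem \ref{majorization-2} is legitimate because the $\lambda$-Poisson kernel in (\ref{D-Poisson-ker-11}) is nonnegative, so $|F|^{p_0}\le Pg$ with $g\ge0$ really does give $\|F(\cdot,y)\|_{L^{l}_{\lambda}}^{p_0}\le\|(Pg)(\cdot,y)\|_{L^{s}_{\lambda}}$ with $q=p/p_0>1$, $s=l/p_0>q$; the $O$-bound via Young's inequality together with the dilation identity $P_y(x)=y^{-(2\lambda+1)}P_1(x/y)$, and the density upgrade to little-$o$ via the contraction in Proposition \ref{Poisson-conjugate-CR}(v), both go through in the Dunkl setting. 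The one step you have not actually proved is the mixed-norm Poisson estimate underlying (iii), and your two suggested routes are not equally viable. The real-interpolation route does close the gap: $g\mapsto(Pg)(\cdot,y)$ is bounded from $L^{q_i}_{\lambda}$ into the space with norm $\sup_{y>0}y^{\beta_i}\|(Pg)(\cdot,y)\|_{L^{s}_{\lambda}}$ for any $1\le q_0<q<q_1<s$ with $\beta_i=(2\lambda+1)(1/q_i-1/s)>0$; the $K$-functional of a couple of power-weighted $L^{\infty}$ spaces can be computed exactly, identifying the $(\theta,m)$ interpolation space with the one whose norm is $\bigl(\int_0^{\infty}\bigl(y^{\beta}\|(Pg)(\cdot,y)\|_{L^{s}_{\lambda}}\bigr)^{m}\,dy/y\bigr)^{1/m}$; and the hypothesis $k\ge p$ enters precisely through the Lorentz embedding $L^{q}_{\lambda}=L^{q,q}_{\lambda}\hookrightarrow L^{q,m}_{\lambda}$ with $m=k/p_0\ge q=p/p_0$. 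By contrast, the alternative you mention --- dominating $(Pg)(x,y)$ by local averages and invoking the Hardy--Littlewood maximal bound --- only reproduces the $L^{\infty}$-in-$y$ endpoint and cannot by itself defeat the logarithmic divergence you correctly diagnosed at the critical power; if you write this up, commit to the interpolation argument and carry out the $K$-functional computation explicitly.
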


\section{Real Characterization of   functions in $H_{\lambda}^p(\RR_+^2)$  }\label{area}
In this Section,   we will give a Real Characterization of  $H^p_{\lambda}(\RR^2_+)$ in Theorem\,\ref{ssst}, from which we extends the results in\,\cite{ZhongKai Li 3} that $H^p_\lambda(\RR^2_+)$ can be characterized by its real parts.

\begin{proposition}
Let $\Omega$ to be a bounded  domain,  symmetric in x on the upper half plane: $\Omega=\left\{(x, y):\,y>0,\ \  (x, y)\in\Omega\Leftrightarrow (-x, y)\in\Omega\right\}$. $F(z)=F(x,y)=u(x,y)+iv(x,y)$ is a $\lambda$-analytic function, where u and v are real $C^2$ functions satisfying $\lambda$-Cauchy-Riemann equations\,(\ref{a c r0})\,and u(x, y) is  odd or even  in x. Then we could obtain:
\begin{eqnarray}\label{i}
\int_{\partial\Omega}F(z)^2|x|^{2\lambda}dz=0
\end{eqnarray}
\end{proposition}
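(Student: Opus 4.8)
The plan is to convert the contour integral (\ref{i}) into an area integral via the complex (Cauchy--Pompeiu) form of Green's theorem and then to annihilate it using the symmetry of $\Omega$ in $x$. Write $\partial_{\bar z}=\tfrac12(\partial_x+i\partial_y)$, so that by the definition of $T_{\bar z}$ and of $D_x$ one has $T_{\bar z}=\partial_{\bar z}+\tfrac{\lambda}{2x}(\mathrm{Id}-R)$, where $(Rf)(x,y)=f(-x,y)$. The first step is to record what $\lambda$-analyticity says in these terms: since $T_{\bar z}F\equiv0$,
$$\partial_{\bar z}F=-\frac{\lambda}{2x}\big(F(x,y)-F(-x,y)\big).$$
This is the only place the hypothesis (\ref{a c r0}) is used.

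Next I would set $H(x,y)=F(z)^2|x|^{2\lambda}$, which is $C^1$ on $\overline{\Omega}\setminus\{x=0\}$ because $u,v$ are $C^2$. Using $\partial_{\bar z}(F^2)=2F\,\partial_{\bar z}F$, the identity above, and $\partial_{\bar z}|x|^{2\lambda}=\tfrac{\lambda}{x}|x|^{2\lambda}$, I expect the two cross terms to cancel and leave the clean formula
$$\partial_{\bar z}H=\frac{\lambda}{x}|x|^{2\lambda}\,F(x,y)F(-x,y),$$
the cancellation being exactly $-F(x,y)\big(F(x,y)-F(-x,y)\big)+F(x,y)^2=F(x,y)F(-x,y)$. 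Applying Green's theorem in the form $\int_{\partial\Omega}H\,dz=2i\iint_{\Omega}\partial_{\bar z}H\,dx\,dy$ then reduces (\ref{i}) to the area integral of $g(x,y):=\tfrac{\lambda}{x}|x|^{2\lambda}F(x,y)F(-x,y)$.

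The final step is pure symmetry. Under $x\mapsto-x$ the factor $\tfrac1x|x|^{2\lambda}$ is odd while $F(x,y)F(-x,y)$ is invariant, so $g(-x,y)=-g(x,y)$; since $\Omega$ is symmetric in $x$ and $g\in L^1(\Omega)$ (because $2\lambda-1>-1$ makes $|x|^{2\lambda-1}$ locally integrable while $F$ is bounded on $\overline\Omega$), the substitution $x\mapsto-x$ forces $\iint_\Omega g\,dx\,dy=0$, which gives (\ref{i}). I note in passing that the assumed parity of $u$ (and the opposite parity of $v$ forced by (\ref{a c r0}), since $D_x$ reverses parity) is not actually needed for the vanishing: it only serves to identify $F(-x,y)=\pm\overline{F(x,y)}$, so that $F(x,y)F(-x,y)=\pm|F(x,y)|^2$ is real, a fact that is convenient when this proposition is later applied.

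The one genuine technical point, which I expect to be the main obstacle, is the legitimacy of Green's theorem across the line $x=0$: there $\partial_x|x|^{2\lambda}=\tfrac{2\lambda}{x}|x|^{2\lambda}$ is singular when $0<\lambda<\tfrac12$, and $\partial\Omega$ may cross $\{x=0\}$. I would handle this by excising the strip $\{|x|\le\epsilon\}$, applying Green's theorem on the two smooth symmetric pieces of $\Omega\cap\{|x|>\epsilon\}$, and letting $\epsilon\to0$: the extra boundary integrals along the segments $x=\pm\epsilon$ are $O(\epsilon^{2\lambda})\to0$ because $|x|^{2\lambda}\to0$ and $F$ stays bounded there, while the area integral converges by dominated convergence. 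Once this limiting argument is set up, the algebraic identity for $\partial_{\bar z}H$ together with the oddness of $g$ closes the proof.
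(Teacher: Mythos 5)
Your proof is correct, and while it rests on the same skeleton as the paper's --- convert the contour integral to an area integral by Green's theorem and kill the area integral by oddness in $x$ over the symmetric domain --- the execution is genuinely different and, in one respect, stronger. The paper works componentwise: it computes $D_x(u^2-v^2)-\partial_y(2uv)$ and $\partial_y(u^2-v^2)+D_x(2uv)$ using the $\lambda$-Cauchy--Riemann equations \emph{together with} the assumed parity of $u$ and $v$, applies Stokes separately on $\Omega^+$ and $\Omega^-$, and arrives at an area integrand equal to $\tfrac{4\lambda}{x}v^2|x|^{2\lambda}$ (resp.\ $-\tfrac{4\lambda}{x}u^2|x|^{2\lambda}$), which vanishes by oddness. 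You instead use the operator identity $T_{\bar z}=\partial_{\bar z}+\tfrac{\lambda}{2x}(\mathrm{Id}-R)$ and the ordinary product rule for $\partial_{\bar z}$ to get the single clean formula $\partial_{\bar z}\bigl(F^2|x|^{2\lambda}\bigr)=\tfrac{\lambda}{x}|x|^{2\lambda}F(x,y)F(-x,y)$, whose oddness under $x\mapsto -x$ needs no parity hypothesis at all; your area integrand differs from the paper's by $\tfrac{2\lambda}{x}(u^2-v^2)|x|^{2\lambda}$, which is itself odd when the parity hypothesis holds, so the two computations are consistent. What your route buys is (a) the observation that the parity assumption on $u$ is superfluous for the vanishing of the contour integral, and (b) an explicit treatment of the only real analytic difficulty, namely that $\partial_x|x|^{2\lambda}$ is singular on $\{x=0\}$ when $0<\lambda<1/2$, which you handle by excising $\{|x|\le\epsilon\}$ and letting $\epsilon\to0$; the paper passes over this point silently even though its own application of Stokes on $\Omega^{\pm}$ faces the same issue. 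What the paper's componentwise version buys is the explicit real identities (its displayed systems for $u^2-v^2$ and $2uv$), which it reuses implicitly when taking real parts of the contour integral in the later distribution-function argument.
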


\begin{proof}
From\,\cite{ZhongKai Li 3}, we could know that the following Formula holds if g(z) is a $\lambda$-analytic function
 \begin{eqnarray}\label{ii1}
\int_{\partial\Omega}g(z)|x|^{2\lambda}dz=0.
\end{eqnarray}
Notice that   $F(z)^2 =u^2(x, y)-v^2(x, y)+2iu(x, y)v(x, y)$ may not  be a $\lambda$-analytic function when F(z) is  $\lambda$-analytic. Thus  Formula\,(\ref{i}) can not be deduced from Formula\,(\ref{ii1}) which is not analog to the properties of  analytic function  in Euclidean spaces. \\If u(x, y) is even in x  and v(x, y)  odd in x, then by Formula\,(\ref{a c r0}) we could obtain
\begin{eqnarray}\left\{\begin{array}{ll}\label{a c r1}
                                    D_x(u^2-v^2)-\partial_y (2uv)=2u(\partial_x u-\partial_y v)-2v(\partial_x v+\partial_y u)=\frac{4\lambda}{x}v^2,&  \\\\\\
                                    \partial_y (u^2-v^2) +D_x(2uv)=2u(\partial_xv+\partial_yu)+2v(-\partial_yv+\partial_xu)+\frac{4\lambda}{x}uv=0.&
\end{array}\right.
\end{eqnarray}
If u(x, y) is odd in x and v(x, y)  even in x, then by Formula\,(\ref{a c r0}) the following equations could be achieved:
\begin{eqnarray}\left\{\begin{array}{ll}\label{a c r2}
                                    D_x(u^2-v^2)-\partial_y (2uv)=2u(\partial_x u-\partial_y v)-2v(\partial_x v+\partial_y u)=-\frac{4\lambda}{x}u^2,&  \\\\\\
                                    \partial_y (u^2-v^2) +D_x(2uv)=2u(\partial_xv+\partial_yu)+2v(-\partial_yv+\partial_xu)+\frac{4\lambda}{x}uv=0.&
                                 \end{array}\right.
\end{eqnarray}
We use $\Omega^+$ and $\Omega^-$ to  denote $\Omega^+ =\{(x, y)|  (x, y)\in\Omega, x\geq0\},\, \Omega^- =\{(x, y)|  (x, y)\in\Omega, x\leq0\}$.
Let $G(z)=G(x, y)=u_1(x, y)+iv_1(x, y)$ to be a $C^1$ function which may not be  $\lambda$-analytic  on $\RR_+^2$, and satisfies that $u_1(x, y)$ and $v_1(x, y)$ are both odd or even in $x$. From Stokes-Theorem we could obtain
\begin{equation}\label{u5}\begin{split}
& \int_{\partial\Omega^+}G(z)|x|^{2\lambda}dz\\&=\int_{\partial\Omega^+}\bigg(u_1(x, y)+iv_1(x, y)\bigg)(x^2)^{\lambda}(dx+idy)\\
&=\int_{\Omega^+} \bigg\{-\bigg(\partial_y u_1+\partial_xv_1+2(\lambda/x)v_1 \bigg)
+i\bigg(\partial_xu_1+2(\lambda/x)u_1-\partial_yv_1 \bigg)\bigg\}|x|^{2\lambda}(dx\wedge dy),\\
\end{split}\end{equation}
and
\begin{equation}\label{u1}\begin{split}
& \int_{\partial\Omega^-}G(z)|x|^{2\lambda}dz\\&=\int_{\partial\Omega^-}\bigg(u_1(x, y)+iv_1(x, y)\bigg)(x^2)^{\lambda}(dx+idy)\\
&=\int_{\Omega^-} \bigg\{-\bigg(\partial_yu_1+\partial_xv_1+2(\lambda/x)v_1 \bigg)
+i\bigg(\partial_xu_1+2(\lambda/x)u_1-\partial_yv_1 \bigg)\bigg\}|x|^{2\lambda}(dx\wedge dy).\\
\end{split}\end{equation}
Notice that $\Omega$ is a bounded  domain symmetric in x, with the fact that $u_1(x, y)$ and $v_1(x, y)$ are both odd or even in $x$, then we could obtain the following Formula\,(\ref{intergal}) from Formulas\,(\ref{u5},\,\ref{u1}):
\begin{equation}\begin{split}\label{intergal}
\int_{\partial\Omega}G(z)|x|^{2\lambda}dz&=\int_{\partial\Omega^+}G(z)|x|^{2\lambda}dz+\int_{\partial\Omega^-}G(z)|x|^{2\lambda}dz\\
&=\int_{\Omega} \bigg\{-\bigg(\partial_yu_1+D_xv_1 \bigg)
+i\bigg(D_xu_1-\partial_yv_1 \bigg)\bigg\}|x|^{2\lambda}(dx\wedge dy).
\end{split}\end{equation}
Notice that $u^2(x, y)-v^2(x, y)$ is even in $x$, and $u(x, y)v(x, y)$ is odd in $x$.
If u(x, y) is even in x and v(x, y)  odd in x, by Formulas\,(\ref{a c r1},\,\ref{intergal}), we could have
\begin{eqnarray}\label{uu1}
\int_{\partial\Omega}F(z)^2|x|^{2\lambda}dz=\int_{\Omega} \frac{4\lambda}{x}v^2 |x|^{2\lambda}(dx\wedge dy).
\end{eqnarray}
If u(x, y) is odd in x and v(x, y)  even in x, Formulas\,(\ref{a c r2},\,\ref{intergal})\,lead to
\begin{eqnarray}\label{uu2}
\int_{\partial\Omega}F(z)^2|x|^{2\lambda}dz=\int_{\Omega} -\frac{4\lambda}{x}u^2 |x|^{2\lambda}(dx\wedge dy).
\end{eqnarray}
As $\Omega$ is a bounded  domain symmetric in x, then Formulas\,(\ref{uu1},\,\ref{uu2}) lead to the following fact:
\begin{eqnarray*}
\int_{\partial\Omega}F(z)^2|x|^{2\lambda}dz=0, \ \ \hbox{when}\,F(z)\,\hbox{is\,$\lambda$-analytic}.
\end{eqnarray*}
This proves the Proposition.
\end{proof}

\tikzset
 {every pin/.style = {pin edge = {<-}},    % pins are arrows from label to point
  > = stealth,                            % arrow tips look like stealth bombers
  flow/.style =    % everything marked as "flow" will be decorated with an arrow
   {decoration = {markings, mark=at position #1 with {\arrow{>}}},
    postaction = {decorate}
   },
  flow/.default = 0.5,          % default position of the arrow is in the middle
  main/.style = {line width=1pt}                    % thick lines for main graph
 }

\begin{tikzpicture}

\draw(1,0) -- (2,0);
\draw(3,0) -- (6,0);
\draw(-1,0) -- (-2,0);
\draw(-3,0) -- (-6,0);

\path[fill=green] (1,0) -- (2,0) -- (1.5,0.8) -- cycle;
\node [below]at(1.5,0.4){Tent};
\node [below]at(1.5,0){$I_{s}$};

\path[fill=green] (3,0) -- (6,0) -- (4.5,2.5) -- cycle;
\node [below]at(4.5,0.4){Tent};
\node [below]at(4.5,0){$I_{k}$};

\path[fill=green] (-3,0) -- (-6,0) -- (-4.5,2.5) -- cycle;
\node [below]at(-4.5,0.4){Tent};
\node [below]at(-4.5,0){$I_{i}$};

\path[fill=green] (-1,0) -- (-2,0) -- (-1.5,0.8) -- cycle;
\node [below]at(-1.5,0.4){Tent};
\node [below]at(-1.5,0){$I_{j}$};

\node [below]at(0,0){A};
\node [below]at(0,-0.4){O};
\node [below]at(7,0){$x=N$};
\node [below]at(-7,0){$x=-N$};
\node [below]at(0,5){$y=N_2$};

\draw [->](0,-1) -- (0,4);

\node [left]at(0,4){y};

\draw[flow](-1.5,0.8) --(-1,0) ;
\draw[flow](-2,0) --(-1.5,0.8) ;
\draw[flow](-3,0) -- (-2,0);
\draw[flow](-4.5,2.5) --(-3,0) ;
\draw[flow](-6,0) -- (-4.5,2.5);
\draw[flow](-7,0) --(-6,0) ;
\draw[flow](-7,5) --(-7,0) ;

\draw [flow](-1,0) -- (1,0);
\draw[flow](1,0) -- (1.5,0.8);
\draw[flow](1.5,0.8) -- (2,0);
\draw[flow](2,0) -- (3,0);
\draw[flow](3,0) -- (4.5,2.5);
\draw[flow](4.5,2.5) -- (6,0);
\draw[flow](6,0) -- (7,0);
\draw[flow](7,0) -- (7,5);

\draw[flow](7,5) -- (-7,5);

\end{tikzpicture}

Let $F(x, y)=u(x, y)+iv(x, y)\in H^p_\lambda(\RR^2_+)$ for $\frac{2\lambda}{2\lambda+1}< p \leq1$.  $F_t(x,y)=F(x, y+t)=u_t(x,y)+iv_t(x,y)$ for some fixed $t>0$, where $u_t(x,y)$ is  even or odd in x.  In the Picture above, the point $A$ denotes  $(0, t)$ and  the point $O$ designates the origin $(0, 0)$.  Let $u_{\nabla}^\ast(x)=\sup_{|s-x|<y}|u(s, y)|$, $u_t(x,y)=u(x, y+t)$, and $u_t(x)=u(x, t)$. Thus $(u_t)_{\nabla}^{*}(x)= \sup_{|s-x|<y}|u_t(s, y)|$.  Then $E_{\sigma}$ is defined as the  following set on a line:
$$E_{\sigma}=\bigg\{(x, y): (u_t)_{\nabla}^{*}(x)>\sigma,\ \ y\equiv t\bigg\},\ \ \hbox{and},\,|E_{\sigma}|_\lambda=\left|\bigg\{x: (u_t)_{\nabla}^{*}(x)>\sigma\bigg\}\right|_\lambda.$$
Notice that the set $E_{\sigma}$ is an open set on the real line $y=t$, thus we could write $E_{\sigma}$ as:
$$ E_{\sigma}= \bigcup_j I_j, $$
where $\left\{I_j\right\}_j$ are the open disjoint Euclidean intervals.  We use $T(I_j)$  to  denote as the tent: $$T(I_j)=\bigg\{ (x, y): |x-x_j|\leq r_j-y,\ \ y\geq t\bigg\},$$
where $x_j$ is the center of the interval $I_j$ and $r_j$  the radius of the interval $I_j$: $$I_j=\bigg\{ (x, y): x_j-r_j<x<x_j+r_j,\ \  y\equiv t\bigg\}.$$
We use $\partial T(I_j)$ to denote as the boundary of the tent $T(I_j)$. Then
$\Gamma$ is denoted as $$\Gamma=\bigg\{(x, y): (x, y)\in\bigcup_j\bigg(\partial T(I_j)\backslash I_j\bigg)\bigcup\bigg(\big\{(x, y): x\in\RR, y=t\big\} \backslash E_{\sigma}\bigg)\bigg\},$$ and $$\Gamma_N=\bigg\{(x, y): \Gamma\cap\big\{(x, y):-N<x <N, y\in\RR\big\}\bigg\}.$$
We use $(x_{\Gamma},y_{\Gamma})$ to denote as the point on $\Gamma$: $$\bigg\{(x_{\Gamma},y_{\Gamma})\bigg\}=\Gamma.$$
We set the domain  $\Omega$ as:
$$\Omega=\bigg\{(x,y):-N\leq x\leq N, y_{\Gamma}\leq y\leq N_2, \hbox{where}\,(x_{\Gamma},y_{\Gamma})\in\Gamma\bigg\},$$
where $N$ and $N_2$ are fixed positive numbers independent on  $t$ that we will discuss in Proposition\,\ref{7}. Notice that $u_t(x,y)$ is an even or odd function in x, thus $\Omega$ is a bounded domain symmetric in x on the upper half plane.

\begin{proposition}\label{7}
 Let $F(x, y)=u(x, y)+iv(x, y)\in H^p_\lambda(\RR^2_+)$ for $\frac{2\lambda}{2\lambda+1}< p \leq1$. Let $t>0$ to be  fixed, $F_t(x,y)=F(x, y+t)=u_t(x,y)+iv_t(x,y)$, where $u_t(x,y)$ is an even or odd function in x. Let $\Gamma$ and  $\Omega$ to be defined as above. Then we could obtain
$$ \int_{\Gamma}F_t(x,y)^{2} |x|^{2\lambda}dz=0 .$$
\end{proposition}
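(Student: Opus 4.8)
The plan is to apply the preceding Proposition (conclusion (\ref{i})) to the vertical translate $F_t$ on the bounded symmetric domain $\Omega$, and then strip off every part of $\partial\Omega$ except the bottom curve $\Gamma$. First I would record that $F_t(x,y)=F(x,y+t)$ is again $\lambda$-analytic: the system (\ref{a c r0}) has no explicit $y$-dependence, so $T_{\bar z}F_t(x,y)=(T_{\bar z}F)(x,y+t)=0$. Moreover $F_t$ is $C^\infty$ on a neighborhood of $\{y\ge t\}$, its real part $u_t$ is even or odd in $x$ by hypothesis, and $\Omega\subset\{y\ge t>0\}$ is bounded and symmetric in $x$. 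Hence the hypotheses of the preceding Proposition are met and
$$\int_{\partial\Omega}F_t(z)^2|x|^{2\lambda}\,dz=0.$$

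Next I would decompose $\partial\Omega$ into its bottom $\Gamma_N=\Gamma\cap\{|x|<N\}$, the two vertical sides $x=\pm N$ (with $y_\Gamma\le y\le N_2$), and the top $y=N_2$, which gives $\int_{\Gamma_N}F_t^2|x|^{2\lambda}\,dz=-(\text{top}+\text{sides})$. The top is the easy term: on $y=N_2$ one has $dz=dx$, so its modulus is at most $\int_{\RR}|F(x,N_2+t)|^2|x|^{2\lambda}\,dx$, which by Theorem \ref{p5}(i) with $l=2$ (legitimate since $p\le1<2$) is $\le c\,(N_2+t)^{-2\delta(1+2\lambda)}\|F\|_{H^p_\lambda}^2$ with $\delta=\frac1p-\frac12>0$; this tends to $0$ as $N_2\to\infty$, uniformly in $N$.

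The delicate term is the pair of vertical sides, and this is the main obstacle: the pointwise bound $|F(x,y)|\le c\,y^{-(2\lambda+1)/p}\|F\|_{H^p_\lambda}$ of Theorem \ref{analytic-character-1}(iv) carries no decay in $x$, so the sides cannot be controlled for every $N$. Instead I would make them vanish along a well-chosen sequence $N_j\to\infty$. The key point is that, again by Theorem \ref{p5}(i) with $l=2$, the exponent $2\delta(1+2\lambda)=(2/p-1)(1+2\lambda)$ exceeds $1$ throughout $p\le1$ (since $2/p-1\ge1$ and $1+2\lambda>1$), whence
$$\int_t^\infty\!\!\int_{\RR}|F(x,y+t)|^2|x|^{2\lambda}\,dx\,dy\le c\int_{2t}^\infty s^{-2\delta(1+2\lambda)}\,ds<\infty.$$
By Tonelli the function $G(x)=|x|^{2\lambda}\int_t^\infty|F(x,y+t)|^2\,dy$ lies in $L^1(\RR)$, so $\liminf_{|x|\to\infty}|x|\,G(x)=0$; choosing $N_j\to\infty$ with $N_j\big(G(N_j)+G(-N_j)\big)\to0$ forces each side integral at $x=\pm N_j$ to be bounded by $G(\pm N_j)\to0$, because $\int_{y_\Gamma}^{N_2}|F(\pm N_j,y+t)|^2\,dy\le\int_t^\infty|F(\pm N_j,y+t)|^2\,dy$.

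Finally I would justify the limit on the left. Splitting $\Gamma$ into the flat part on $y=t$ outside $E_\sigma$ and the tent tops, the flat part contributes $\int_{\RR\setminus E_\sigma}F(x,2t)^2|x|^{2\lambda}\,dx$, absolutely convergent by Theorem \ref{p5}(i); on the tents $|F_t|\le c\,(2t)^{-(2\lambda+1)/p}$ is bounded (Theorem \ref{analytic-character-1}(iv)) while the total weighted length of $\bigcup_j\partial T(I_j)$ is $\lesssim\int_{E_\sigma}|x|^{2\lambda}\,dx=|E_\sigma|_\lambda/c_\lambda$, finite because $(u_t)_{\nabla}^{*}\le F_{\nabla}^*\in L^p_\lambda$ (Theorem \ref{ss}(iv)) gives the weak-type bound $|E_\sigma|_\lambda\le\sigma^{-p}\|(u_t)_{\nabla}^{*}\|_{L^p_\lambda}^p<\infty$ (here I use the doubling of $|x|^{2\lambda}\,dx$ to compare $\sup_{\bar I_j}|x|^{2\lambda}\,|I_j|$ with $\int_{I_j}|x|^{2\lambda}\,dx$). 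Thus $\int_\Gamma F_t^2|x|^{2\lambda}\,dz$ converges absolutely and $\int_{\Gamma_{N_j}}\to\int_\Gamma$. Letting first $N_2\to\infty$ (for fixed $N_j$, killing the top and replacing the sides by their convergent $\int_t^\infty$ forms) and then $j\to\infty$ in the identity of the second paragraph sends $\Gamma_{N_j}\to\Gamma$ and the sides to $0$, leaving $\int_\Gamma F_t(x,y)^2|x|^{2\lambda}\,dz=0$, as claimed.
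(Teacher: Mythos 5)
Your proposal is correct and follows essentially the same route as the paper: apply the preceding symmetric-domain Proposition to $F_t$ on $\Omega$, split $\partial\Omega$ into $\Gamma_N$, the top, and the two vertical sides, kill the top via the $L^2_\lambda$ decay of Theorem~\ref{p5}(i) and the sides via square-integrability of $F_t$ over the half-plane (you integrate Theorem~\ref{p5}(i) in $y$, the paper cites Theorem~\ref{p5}(iii) directly; you pick good abscissae by an $L^1$--liminf argument, the paper by continuity of $g_t$ --- these are interchangeable). Your extra paragraph justifying the absolute convergence of $\int_{\Gamma}$ and hence $\int_{\Gamma_{N_j}}\to\int_{\Gamma}$ is a point the paper passes over silently, and is a welcome addition rather than a deviation.
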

\begin{proof}
 It is clear that $F_{t}$ is a $\lambda$-analytic and continuous function. Taking $l=2, k=2, \delta=1/p-1/2$,
 we could have
 \begin{eqnarray}\label{i 5}
 \delta(2\lambda+1)-\frac{1}{k}&=&(2\lambda+1)\left(\frac{1}{p}-\frac{1}{2}\right)-\frac{1}{2}
 \\ \nonumber&=&\frac{2\lambda+1}{p}-(\lambda+1)
 \\ \nonumber&>&0.
 \end{eqnarray}
 Thus by Formulas\,(\ref{i 1},\,\ref{i 5}),  we could obtain
 \begin{eqnarray}\label{i 4}
 c\|F\|_{H_{\lambda}^p(\RR_+^2)}&\geq& \left(\int_0^{+\infty} y^{2(1/p-1/2)(1+2\lambda)-1}\left(\int_{\RR}|F(x,y)|^{2}|x|^{2\lambda}dx \right)dy \right)^{\frac{1}{2}}
 \\ \nonumber &\geq& \left(\int_t^{+\infty} y^{2(1/p-1/2)(1+2\lambda)-1}\left(\int_{\RR}|F(x,y)|^{2}|x|^{2\lambda}dx \right)dy \right)^{\frac{1}{2}}
 \\ \nonumber &\geq& \left(\int_0^{+\infty} t^{2(1/p-1/2)(1+2\lambda)-1}\left(\int_{\RR}|F(x,y+t)|^{2}|x|^{2\lambda}dx \right)dy \right)^{\frac{1}{2}}.
 \end{eqnarray}
 Then by Formula\,(\ref{i 4}),  the following holds:
\begin{eqnarray}\label{i 3}
\left(\int_0^{+\infty} \left(\int_{\RR}|F_t(x,y)|^{2}|x|^{2\lambda}dx \right)dy \right)^{\frac{1}{2}}\leq c t^{(1/2-1/p)(2\lambda+1)+(1/2)}\|F\|_{H_{\lambda}^p(\RR_+^2)}.
\end{eqnarray}
We use  $g_{t}(x)$ to denote as $$g_{t}(x)=\int_{\RR}\left(|F_t(x,y)|^{2}+|F_t(-x,y)|^{2}\right) |x|^{2\lambda}dy.$$
Then by Fubini's theorem and Formula\,(\ref{i 3}), we could deduce that  $$\left(\int_{\RR}g_{t}(x)dx\right)^{\frac{1}{2}}\leq c t^{(1/2-1/p)(2\lambda+1)+(1/2)}\|F\|_{H_{\lambda}^p(\RR_+^2)}.$$
Thus for any $\varepsilon>0$, there exists $N(\varepsilon)>0$, such that $$\int_{N(\varepsilon)}^{+\infty} g_{t}(x)dx <\varepsilon/2.$$
Notice that $g_{t}(x)$ is continuous in $x$, thus we could deduce that there exists $ N\geq N(\varepsilon)$, such that $g_{t}(N)<\varepsilon/2$:
\begin{eqnarray}\label{i4}
\int_0^{+\infty}\left(|F_t(N,y)|^{2}+|F_t(-N,y)|^{2}\right) |N|^{2\lambda}dy<\varepsilon/2.
\end{eqnarray}
We could deduce  from Formula\,(\ref{i 2})\,that for any $\varepsilon>0$, there exists $N_2$ such that the following holds:
\begin{eqnarray}\label{i5}
\int_{\RR}|F_t(x,N_2)|^{2} |x|^{2\lambda}dx<\varepsilon/2.
\end{eqnarray}
Notice that $u_t(x,y)$ is an even or odd function in x, thus $\Omega$ is the domain symmetric in x on the upper half plane: $$\Omega=\bigg\{(x,y):-N\leq x\leq N, y_{\Gamma}\leq y\leq N_2, \hbox{where}\,(x_{\Gamma},y_{\Gamma})\in\Gamma\bigg\}.$$
We use $y_{\Gamma}(x_0)$ to denote as $$y_{\Gamma}(x_0)=y_{\Gamma}\,\ \  \hbox{when}\,\ \  x_{\Gamma}=x_0 .$$
Then   $y_{\Gamma}(x_0)=y_{\Gamma}(-x_0)$. By Formula\,(\ref{i})\,we have:
\begin{equation}\label{i 6}
\begin{split}
\left|\int_{\partial\Omega}F_t(x,y)^{2} |x|^{2\lambda}dz\right|
&=\left|\int_{\Gamma_{N}}F_t(x,y)^{2} |x|^{2\lambda}dz +\int_{y_{\Gamma}(N)}^{N_2}F_t(N,y)^{2}|N|^{2\lambda}dy\right.
\\&+\left.\int_{N}^{-N}F_t(x,N_2)^{2} |x|^{2\lambda}dx +\int_{N_2}^{y_{\Gamma}(-N)}F_t(-N,y)^{2}|N|^{2\lambda}dy\right|
=0.\end{split}
\end{equation}
From  Formulas\,(\ref{i4},\,\ref{i5},\,\ref{i 6}), we could deduce the following inequality  for any $\varepsilon>0$:
\begin{eqnarray*}
\left|\int_{\Gamma_{N}}F_t(x,y)^{2} |x|^{2\lambda}dz\right|\lesssim \varepsilon.
\end{eqnarray*}
Notice  that $N(\varepsilon)\rightarrow+\infty$ as $\varepsilon\rightarrow0+$.
By the arbitrariness of $\varepsilon>0$, then we could deduce that:
\begin{eqnarray}\label{ii}
\int_{\Gamma}F_t(x,y)^{2} |x|^{2\lambda}dz=\lim_{N\rightarrow+\infty}\left|\int_{\Gamma_{N}}F_t(x,y)^{2} |x|^{2\lambda}dz\right|=0.
\end{eqnarray}
This proves the Proposition.
\end{proof}

\begin{proposition}
$F(x, y)=u(x, y)+iv(x, y)\in H^p_\lambda(\RR^2_+)$ for $\frac{2\lambda}{2\lambda+1}< p\leq1$. For any fixed $ t>0$, $F_t(x,y)$ is defined as $F_t(x,y)=F(x, y+t)=u_t(x,y)+iv_t(x,y)$, $u_t(x)=u(x, t)$, $v_t(x)=v(x, t)$, $E_{\sigma}=\big\{(x, y): (u_t)_{\nabla}^{*}(x)>\sigma,\ \ y\equiv t\big\}$,  where $u_t(x,y)$ is an even or odd function in x.    Then we could obtain
\begin{eqnarray}\label{iii}
|\left\{x\in\RR: |v_t(x)|\geq\sigma\right\}|_{\lambda}\leq 3|E_{\sigma}|_{\lambda}+\frac{2}{\sigma^2} \int_{0}^{\sigma}s|E_{s}|_{\lambda}ds.
\end{eqnarray}
\end{proposition}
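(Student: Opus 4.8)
The plan is to control the distribution function of the conjugate boundary values $v_t$ by transferring an $L^2$ estimate from the ``sawtooth'' curve $\Gamma$ down to the line $y=t$, using the Cauchy-type identity of Proposition\,\ref{7}. Write $O=\{x\in\RR:(u_t)_{\nabla}^{*}(x)>\sigma\}$, so that $|O|_\lambda=|E_\sigma|_\lambda$, and split
\[
\{x:|v_t(x)|\ge\sigma\}=\big(\{|v_t|\ge\sigma\}\cap O\big)\cup\big(\{|v_t|\ge\sigma\}\cap O^c\big)=:A\cup B .
\]
The first piece is harmless, $|A|_\lambda\le|E_\sigma|_\lambda$, so everything reduces to estimating $|B|_\lambda$. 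Since $|v_t|\ge\sigma$ on $B$, Chebyshev's inequality gives $\sigma^2|B|_\lambda\le c_\lambda\int_B v_t^2|x|^{2\lambda}dx\le c_\lambda\int_{O^c}v_t(x)^2|x|^{2\lambda}dx$, and it suffices to bound $c_\lambda\int_{O^c}v_t^2|x|^{2\lambda}dx$.

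First I would feed the identity $\int_\Gamma F_t^2|x|^{2\lambda}dz=0$ of Proposition\,\ref{7} into this. Writing $F_t^2=(u_t^2-v_t^2)+2iu_tv_t$ and $dz=dx+i\,dy$, the real part of the identity (after multiplying by $c_\lambda$ to match the normalization of $|\cdot|_\lambda$) reads
\[
c_\lambda\int_\Gamma\big[(u_t^2-v_t^2)\,dx-2u_tv_t\,dy\big]|x|^{2\lambda}=0 .
\]
Decompose $\Gamma=\Gamma_0\cup\Gamma_1$, where $\Gamma_0=\{(x,t):x\in O^c\}$ is the flat part on the line $y=t$ and $\Gamma_1=\bigcup_j(\partial T(I_j)\setminus I_j)$ is the union of the tent roofs. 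On $\Gamma_0$ one has $dy=0$, and with the positive (counterclockwise) orientation of $\partial\Omega$ the curve $\Gamma$ is traversed with $x$ increasing, so the flat contribution is exactly $c_\lambda\int_{O^c}(u_t(x)^2-v_t(x)^2)|x|^{2\lambda}dx$. (This is the sign that makes the argument close; the opposite orientation would force an upper bound for $v_t$ on the roofs, which is false.) Hence
\[
c_\lambda\int_{O^c}v_t^2|x|^{2\lambda}dx=c_\lambda\int_{O^c}u_t^2|x|^{2\lambda}dx+c_\lambda\int_{\Gamma_1}\big[(u_t^2-v_t^2)\,dx-2u_tv_t\,dy\big]|x|^{2\lambda}.
\]

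The hard part will be the tent integral over $\Gamma_1$, where $v_t$ is a priori unbounded. Two observations resolve it. Geometrically, every point of $\Gamma$ lies on the closed nontangential cone issuing from a point of $\partial O$ at which $(u_t)_{\nabla}^{*}\le\sigma$; since $|u_t|$ never exceeds $(u_t)_{\nabla}^{*}$ along such a cone, one obtains $|u_t|\le\sigma$ everywhere on $\Gamma$, in particular on $\Gamma_1$. Algebraically, each roof of $T(I_j)$ has slope $\pm1$, so along it $dy=\mp dx$ and the integrand collapses by completion of the square:
\[
(u_t^2-v_t^2)\,dx-2u_tv_t\,dy=\big(u_t^2-v_t^2\mp2u_tv_t\big)\,dx=\big(2u_t^2-(u_t\pm v_t)^2\big)\,dx\le 2u_t^2\,dx\le 2\sigma^2\,dx .
\]
Because the $x$-projection of $\Gamma_1$ is exactly $\bigcup_j I_j=E_\sigma$ (the two halves of each roof covering the two halves of its base once), integrating with $dx>0$ gives $c_\lambda\int_{\Gamma_1}[\cdots]|x|^{2\lambda}\le 2\sigma^2 c_\lambda\int_{E_\sigma}|x|^{2\lambda}dx=2\sigma^2|E_\sigma|_\lambda$. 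This is the crucial step: the term $2u_t^2-(u_t\pm v_t)^2$ is what makes the unbounded $v_t$ disappear, and it works precisely because the $45^\circ$ aperture of the cone defining $(u_t)_{\nabla}^{*}$ matches the slope of the tents.

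Finally I would bound $\int_{O^c}u_t^2$ by the maximal function and evaluate by the layer-cake formula: since $|u_t(x)|\le(u_t)_{\nabla}^{*}(x)$, and $\{x\in O^c:(u_t)_{\nabla}^{*}>s\}$ is contained in $\{(u_t)_{\nabla}^{*}>s\}$ for $s<\sigma$ and is empty for $s\ge\sigma$,
\[
c_\lambda\int_{O^c}u_t^2|x|^{2\lambda}dx\le\int_0^{\infty}2s\,\big|\{x\in O^c:(u_t)_{\nabla}^{*}(x)>s\}\big|_\lambda\,ds\le\int_0^{\sigma}2s|E_s|_\lambda\,ds .
\]
Combining the last three displays yields $c_\lambda\int_{O^c}v_t^2|x|^{2\lambda}dx\le\int_0^{\sigma}2s|E_s|_\lambda\,ds+2\sigma^2|E_\sigma|_\lambda$, so that $|B|_\lambda\le\frac{2}{\sigma^2}\int_0^{\sigma}s|E_s|_\lambda\,ds+2|E_\sigma|_\lambda$. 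Adding $|A|_\lambda\le|E_\sigma|_\lambda$ gives
\[
\big|\{x:|v_t(x)|\ge\sigma\}\big|_\lambda\le 3|E_\sigma|_\lambda+\frac{2}{\sigma^2}\int_0^{\sigma}s|E_s|_\lambda\,ds,
\]
which is the asserted inequality, the constant $3$ arising as one copy of $|E_\sigma|_\lambda$ from $A$ and two from the tent estimate.
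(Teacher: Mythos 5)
Your proposal is correct and follows essentially the same route as the paper: the real part of the contour identity $\int_\Gamma F_t^2|x|^{2\lambda}dz=0$ from Proposition\,\ref{7}, the split of $\Gamma$ into the flat part off $E_\sigma$ and the tent roofs, cancellation of $v_t^2$ on the roofs via the slope-$\pm1$ geometry together with $|u_t|\le\sigma$ there, and the layer-cake bound $\int_{O^c}u_t^2\le 2\int_0^\sigma s|E_s|_\lambda ds$ on the flat part. Your completion-of-the-square step is just a cleaner packaging of the paper's estimate $|2u_tv_t\,dy|\le(u_t^2+v_t^2)|dx|$, and your explicit attention to orientation is a point the paper leaves implicit.
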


\begin{proof}
Let $\Gamma$ to be  defined as above. Then from Formula\,(\ref{ii}), we have
\begin{eqnarray*}
\int_{\Gamma}(u_t(x, y)+iv_t(x, y))^{2} |x|^{2\lambda}dz=0.
\end{eqnarray*}
Taking the real part of the above equation, we could get
\begin{eqnarray*}
Re\int_{\Gamma}(u_t(x, y)+iv_t(x, y))^{2} |x|^{2\lambda}dz=Re\int_{\Gamma}(u_t(x, y)+iv_t(x, y))^{2} |x|^{2\lambda}(dx+idy)=0.
\end{eqnarray*}
That is
\begin{eqnarray*}
\int_{\Gamma}(u_t(x, y)^2-v_t(x, y)^2)|x|^{2\lambda}dx-2u_t(x, y)v_t(x, y)|x|^{2\lambda}dy=0.
\end{eqnarray*}
Then we could obtain:
\begin{eqnarray}\label{5}
\nonumber0&=&\int_{\big\{(x, y): x\in\RR, y=t\big\} \backslash E_{\sigma}}(u_t(x)^2-v_t(x)^2)|x|^{2\lambda}dx +\int_{\cup_j\left(\partial T(I_j)\backslash I_j\right)}(u_t(x, y)^2-v_t(x, y)^2)|x|^{2\lambda}dx\\&-& \int_{\cup_j\left(\partial T(I_j)\backslash I_j\right)} 2u_t(x, y)v_t(x, y)|x|^{2\lambda}dy.
\end{eqnarray}
It is clear that
\begin{eqnarray}\label{6}
\left|\int_{\cup_j\left(\partial T(I_j)\backslash I_j\right)} 2u_t(x, y)v_t(x, y)|x|^{2\lambda}dy\right|\leq\int_{\cup_j\left(\partial T(I_j)\backslash I_j\right)}(u_t(x, y)^2+v_t(x, y)^2)|x|^{2\lambda}dy.
\end{eqnarray}
By Formulas\,(\ref{5},\,\ref{6}),  we could have
\begin{eqnarray*}
\int_{\big\{(x, y): x\in\RR, y=t\big\} \backslash E_{\sigma}}v_t(x)^2|x|^{2\lambda}dx \leq \int_{\big\{(x, y): x\in\RR, y=t\big\} \backslash E_{\sigma}}u_t(x)^2|x|^{2\lambda}dx +\int_{\cup_j\left(\partial T(I_j)\backslash I_j\right)}2u_t(x, y)^2|x|^{2\lambda}dx.
\end{eqnarray*}
Notice that $|u_t(x, y)|\leq \sigma$ on $\cup_j\left(\partial T(I_j)\backslash I_j\right)$, thus we could obtain that
\begin{eqnarray}\label{measure}
\int_{\big\{(x, y): x\in\RR, y=t\big\} \backslash E_{\sigma}}v_t(x)^2|x|^{2\lambda}dx \leq \int_{\big\{(x, y): x\in\RR, y=t\big\} \backslash E_{\sigma}}((u_t)_{\nabla}^{*}(x))^2|x|^{2\lambda}dx +2\sigma^2|E_{\sigma}|_{\lambda}.
\end{eqnarray}
We could also notice that:
\begin{eqnarray}\label{i0}
 & &\ \ \int_{\big\{(x, y): x\in\RR, y=t\big\} \backslash E_{\sigma}}((u_t)_{\nabla}^{*}(x))^2|x|^{2\lambda}dx
 \\ \nonumber &=&2\int_0^{+\infty}s\left|\bigg\{x\in\big\{(x, y): x\in\RR, y=t\big\} \backslash E_{\sigma}:(u_t)_{\nabla}^*(x)>s\bigg\}\right|_{\lambda}ds
\\ \nonumber&\leq&2\int_0^{\sigma}s|E_s|_{\lambda}ds.
\end{eqnarray}
Then by Formulas\,(\ref{measure},\,\ref{i0}), we could obtain the following Formula:
\begin{eqnarray*}
|\left\{x\in\RR: |v_t(x)|\geq\sigma\right\}|_{\lambda}&\leq& |E_{\sigma}|_{\lambda}+\left|\bigg\{x\in\big\{(x, y): x\in\RR, y=t\big\} \backslash E_{\sigma}: |v_t(x)|\geq\sigma\bigg\}\right|_{\lambda}\\&\leq&|E_{\sigma}|_{\lambda}+\sigma^{-2}\int_{\big\{(x, y): x\in\RR, y=t\big\} \backslash E_{\sigma}}v_t(x)^2|x|^{2\lambda} dx
\\&\leq&|E_{\sigma}|_{\lambda}+\sigma^{-2}\int_{\big\{(x, y): x\in\RR, y=t\big\} \backslash E_{\sigma}}((u_t)_{\nabla}^{*}(x))^2|x|^{2\lambda}dx +2|E_{\sigma}|_{\lambda}
\\&\leq&3|E_{\sigma}|_{\lambda}+\frac{2}{\sigma^2} \int_{0}^{\sigma}s|E_{s}|_{\lambda}ds.\\
\end{eqnarray*}
\end{proof}

\begin{theorem}\label{iv}
 $F(x, y)=u(x, y)+iv(x, y)\in H^p_\lambda(\RR^2_+)$  $(\frac{2\lambda}{2\lambda+1}< p\leq1)$,  where $u(x,y)$ is an even or odd function in x. We use $F_t(x,y)$ to denote as  $F_t(x,y)=u_t(x,y)+iv_t(x,y)=F(x, y+t)$ for any fixed $ t>0$ . Then
\begin{eqnarray}\label{i01}
\sup_{y>0}\int_{-\infty}^{+\infty} |v(x, y)|^p|x|^{2\lambda}dx\leq c\|u_{\nabla}^*\|_{L^p_{\lambda}}^p.
\end{eqnarray}
\end{theorem}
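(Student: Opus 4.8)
The plan is to fix $t>0$, establish the bound $\|v_t\|_{L^p_\lambda}^p\le c_p\,\|(u_t)_{\nabla}^{*}\|_{L^p_\lambda}^p$ with a constant $c_p$ independent of $t$, and then pass to the supremum. We may assume $\|u_{\nabla}^{*}\|_{L^p_\lambda}<\infty$, otherwise there is nothing to prove. The engine is the distributional inequality (\ref{iii}) from the preceding Proposition combined with the layer-cake (distribution function) representation of the $L^p_\lambda$ norm. First I would write, for each fixed $t$,
\begin{equation*}
c_\lambda\int_\RR |v_t(x)|^p|x|^{2\lambda}dx = p\int_0^\infty \sigma^{p-1}\,\big|\{x:|v_t(x)|>\sigma\}\big|_\lambda\,d\sigma,
\end{equation*}
and insert (\ref{iii}) into the right-hand side, splitting it into two pieces corresponding to the two terms on the right of (\ref{iii}).

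The first piece, $3p\int_0^\infty \sigma^{p-1}|E_\sigma|_\lambda\,d\sigma$, equals $3\,\|(u_t)_{\nabla}^{*}\|_{L^p_\lambda}^p$ by the same layer-cake identity applied to $(u_t)_{\nabla}^{*}$, since $E_\sigma=\{x:(u_t)_{\nabla}^{*}(x)>\sigma\}$. For the second piece, $2p\int_0^\infty \sigma^{p-3}\big(\int_0^\sigma s\,|E_s|_\lambda\,ds\big)\,d\sigma$, I would invoke Tonelli (all integrands are nonnegative) to interchange the order of integration over the region $0<s<\sigma$, obtaining $2p\int_0^\infty s\,|E_s|_\lambda\big(\int_s^\infty \sigma^{p-3}\,d\sigma\big)\,ds$. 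The inner integral equals $s^{p-2}/(2-p)$, and this is where the hypothesis $p\le 1$ enters decisively: the integral $\int_s^\infty\sigma^{p-3}\,d\sigma$ converges at infinity precisely because $p<2$. The second piece then collapses to $\tfrac{2}{2-p}\,\|(u_t)_{\nabla}^{*}\|_{L^p_\lambda}^p$, so altogether $\|v_t\|_{L^p_\lambda}^p\le\big(3+\tfrac{2}{2-p}\big)\|(u_t)_{\nabla}^{*}\|_{L^p_\lambda}^p$.

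It remains to remove the $t$-dependence, which is a purely geometric comparison of cones. Writing $(u_t)_{\nabla}^{*}(x)=\sup_{|s-x|<y}|u(s,y+t)|$ and setting $y'=y+t$, every admissible point satisfies $|s-x|<y=y'-t<y'$, so it also lies in the aperture-one cone defining $u_{\nabla}^{*}(x)$; hence $(u_t)_{\nabla}^{*}(x)\le u_{\nabla}^{*}(x)$ pointwise, and consequently $\|(u_t)_{\nabla}^{*}\|_{L^p_\lambda}\le\|u_{\nabla}^{*}\|_{L^p_\lambda}$. Combining this with the previous step gives $\int_\RR|v(x,t)|^p|x|^{2\lambda}dx\le c\,\|u_{\nabla}^{*}\|_{L^p_\lambda}^p$ uniformly in $t$, and taking the supremum over $t>0$ (relabelled as $y$) yields (\ref{i01}). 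The only genuinely delicate point is the second piece: the double integral is finite exactly because $p<2$, which is why the argument lives in the range $p\le 1$; everything else is bookkeeping with the distribution function and the one-line cone inclusion.
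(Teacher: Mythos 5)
Your proposal is correct and follows essentially the same route as the paper: the layer-cake representation of $\|v_t\|_{L^p_\lambda}^p$, insertion of the distributional inequality (\ref{iii}), Tonelli on the second term to get the factor $\tfrac{2}{2-p}$ (using $p<2$), and the constant $\tfrac{8-3p}{2-p}$. Your explicit justification of the $t$-uniform bound via the cone inclusion $(u_t)_{\nabla}^{*}\le u_{\nabla}^{*}$ is a step the paper uses only implicitly, so spelling it out is a welcome addition rather than a deviation.
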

\begin{proof}
By Formula\,(\ref{iii}),  for any fixed $ t>0$,  we could deduce the following Formula:
\begin{eqnarray*}
\int_{-\infty}^{+\infty}|v(x, t)|^p|x|^{2\lambda}dx&=&\int_{-\infty}^{+\infty}|v_t(x)|^p|x|^{2\lambda}dx\\
&=&\int_{0}^{+\infty}p\sigma^{p-1}|\{x\in\RR:|v_t(x)|>\sigma\}|_{\lambda}d\sigma\\
&\leq&\int_{0}^{+\infty}3p\sigma^{p-1}|E_{\sigma}|_{\lambda}d\sigma+\int_{0}^{+\infty}2p\sigma^{p-3}\int_0^{\sigma}s|E_s|_{\lambda}dsd\sigma \\
&=&3\int_{0}^{+\infty}|(u_t)_{\nabla}^*(x)|^p|x|^{2\lambda}dx+\int_{0}^{+\infty}\left(2p\int_s^{+\infty}\sigma^{p-3}d\sigma\right) s|E_s|_{\lambda}ds\\
&\leq&3\|u_{\nabla}^*\|_p^p+\frac{2p}{2-p}\int_{0}^{+\infty}s^{p-1}|E_s|_{\lambda}ds\\
&\leq&\frac{8-3p}{2-p}\|u_{\nabla}^*\|_{L^p_{\lambda}}^p.
\end{eqnarray*}
Then
$$\sup_{y>0}\int_{-\infty}^{+\infty} |v(x, y)|^p|x|^{2\lambda}dx\leq c\|u_{\nabla}^*\|_{L^p_{\lambda}}^p.$$
\end{proof}
We set $u_o$, $u_e$, $v_o$ and $v_e$ as: $u_o=(u(x,y)-u(-x,y))/2$, $u_e=(u(x,y)+u(-x,y))/2$, $v_o=(v(x,y)-v(-x,y))/2$, $v_e=(v(x,y)+v(-x,y))/2$. Then the functions $F_o$ and $F_e$ can be given by $$F_o=u_o+iv_e,\ \ \ F_e=u_e+iv_o.$$
\begin{proposition}\label{vv}
If $F=u+iv$ is a $\lambda$-analytic function,  then $F_o=u_o+iv_e, F_e=u_e+iv_o$ are $\lambda$-analytic functions, furthermore we could have
$$F\in H^p_{\lambda}(\RR_+^2)\Leftrightarrow F_e,\,F_o\in H^p_{\lambda}(\RR_+^2),$$
and $$\|F\|_{H^p_{\lambda}(\RR_+^2)}^p\thickapprox\|F_e\|_{H^p_{\lambda}(\RR_+^2)}^p+\|F_o\|_{H^p_{\lambda}(\RR_+^2)}^p,$$
where $\frac{2\lambda}{2\lambda+1}<p\leq1$.
\end{proposition}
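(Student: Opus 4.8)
The plan is to verify the $\lambda$-analyticity of $F_o$ and $F_e$ by direct computation, then deduce the norm equivalence from the parity decomposition of the defining integral. The key observation is that the Dunkl operator $D_x$ interacts with parity in a controlled way: writing $D_xf = f'(x) + \frac{\lambda}{x}[f(x)-f(-x)]$, the reflection term vanishes on even functions and doubles on odd functions, so $D_x$ sends even functions to odd functions and odd functions to even functions (the derivative part already flips parity, and the reflection part respects this). First I would record this parity behavior of $D_x$ precisely, together with the trivial fact that $\partial_y$ preserves parity in $x$.

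\textbf{Checking $\lambda$-analyticity of the summands.}
Given that $F=u+iv$ satisfies the $\lambda$-Cauchy-Riemann equations\,(\ref{a c r0}), namely $D_xu=\partial_yv$ and $\partial_yu=-D_xv$, I would split each equation into its even and odd parts in $x$. Since $D_x$ flips parity while $\partial_y$ preserves it, the even part of the identity $D_xu=\partial_yv$ reads $D_x(u_o)=\partial_y(v_e)$ and the odd part reads $D_x(u_e)=\partial_y(v_o)$; similarly $\partial_yu=-D_xv$ splits as $\partial_y(u_o)=-D_x(v_e)$ and $\partial_y(u_e)=-D_x(v_o)$. Reading these off, the pair $(u_o,v_e)$ satisfies exactly the system\,(\ref{a c r0}), so $F_o=u_o+iv_e$ is $\lambda$-analytic, and likewise $(u_e,v_o)$ gives that $F_e=u_e+iv_o$ is $\lambda$-analytic. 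This is the structural heart of the argument and where care with the parity bookkeeping is essential.

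\textbf{Deriving the norm equivalence.}
For the quantitative statement I would work with the defining quantity $\int_{\RR}|F(x,y)|^p|x|^{2\lambda}dx$ at each fixed $y>0$. Pointwise in $x$ one has $|F|^2 = u^2+v^2$ and, using that cross terms $u_ou_e$, $v_ov_e$ are odd in $x$ while the squared pieces are even, the substitution $x\mapsto -x$ gives
\begin{equation*}
|F(x,y)|^2 + |F(-x,y)|^2 = 2\bigl(|F_o(x,y)|^2 + |F_e(x,y)|^2\bigr).
\end{equation*}
Symmetrizing the integral over $\RR$ (which is legitimate because $|x|^{2\lambda}dx$ is even) and invoking the elementary equivalence $(a+b)^{p/2}\thickapprox a^{p/2}+b^{p/2}$ for $a,b\ge0$ and $0<p\le1$ then yields, after taking $\sup_{y>0}$, the two-sided bound
\begin{equation*}
\|F\|_{H^p_{\lambda}(\RR_+^2)}^p \thickapprox \|F_e\|_{H^p_{\lambda}(\RR_+^2)}^p + \|F_o\|_{H^p_{\lambda}(\RR_+^2)}^p,
\end{equation*}
with constants depending only on $p$. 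The finiteness equivalence $F\in H^p_\lambda(\RR_+^2)\Leftrightarrow F_e,F_o\in H^p_\lambda(\RR_+^2)$ is then immediate from this comparison together with the $\lambda$-analyticity already established.

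\textbf{Anticipated obstacle.}
The main subtlety is the parity bookkeeping in the second step: one must be certain that $D_x$ genuinely maps even functions to odd and vice versa, since the reflection term $\frac{\lambda}{x}[f(x)-f(-x)]$ must be handled alongside the ordinary derivative rather than discarded. Once that parity rule is pinned down the decomposition of the Cauchy-Riemann system is forced, and the remaining analytic content reduces to the routine symmetrization and the $0<p\le1$ quasi-norm inequality, neither of which presents real difficulty.
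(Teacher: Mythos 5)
Your proposal is correct and follows essentially the same route as the paper: a parity decomposition of the $\lambda$-Cauchy--Riemann system (using that $D_x$ exchanges even and odd functions while $\partial_y$ preserves parity) to get $\lambda$-analyticity of $F_e$ and $F_o$, followed by elementary $p$-quasi-norm estimates against the even measure $|x|^{2\lambda}dx$ for the norm equivalence. You are in fact more explicit than the paper, which dismisses the analyticity step as clear and states the integral comparisons without the symmetrization $x\mapsto -x$ that your identity $|F(x,y)|^2+|F(-x,y)|^2=2\bigl(|F_o(x,y)|^2+|F_e(x,y)|^2\bigr)$ makes transparent.
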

\begin{proof}
It is clear that $F_o=u_o+iv_e, F_e=u_e+iv_o$ both satisfy the the $\lambda$-Cauchy-Riemann equations
\begin{eqnarray*}\left\{\begin{array}{ll}
                                    D_xu_o-\partial_y v_e=0,&  \\
                                    \partial_y u_o +D_xv_e=0,&
                                 \end{array}\right.
 \left\{\begin{array}{ll}
                                    D_xu_e-\partial_y v_o=0,&  \\
                                    \partial_y u_e +D_xv_o=0.&
                                 \end{array}\right.
\end{eqnarray*}
Notice that $$F=F_e+F_o,$$
thus we could obtain that
$$\|F\|_{H^p_{\lambda}(\RR_+^2)}^p\leq \|F_e\|_{H^p_{\lambda}(\RR_+^2)}^p+\|F_o\|_{H^p_{\lambda}(\RR_+^2)}^p.$$
Also we could obtain the following Formula:
\begin{eqnarray*}
\|F_e\|_{H^p_\lambda(\RR^2_+)}^p=\sup\limits_{y>0}\left\{c_{\lambda}\int_{\RR}|u_e^2+v_o^2|^{p/2}|x|^{2\lambda}dx \right\}\leq c_p\sup\limits_{y>0}\left\{c_{\lambda}\int_{\RR}|u^2+v^2|^{p/2}|x|^{2\lambda}dx \right\},
\end{eqnarray*}
and
\begin{eqnarray*}
\|F_o\|_{H^p_\lambda(\RR^2_+)}^p=\sup\limits_{y>0}\left\{c_{\lambda}\int_{\RR}|u_o^2+v_e^2|^{p/2}|x|^{2\lambda}dx \right\}\leq c_p\sup\limits_{y>0}\left\{c_{\lambda}\int_{\RR}|u^2+v^2|^{p/2}|x|^{2\lambda}dx \right\}.
\end{eqnarray*}
This proves the Proposition.
\end{proof}

\begin{theorem}\label{sss}
 If $F(z)=u(x, y)+iv(x, y) \in H_{\lambda}^p(\RR_+^2)$, for $\frac{2\lambda}{2\lambda+1}< p\leq1$, then the following holds:
\begin{equation}\label{sk5}
\|F\|_{H_{\lambda}^p(\RR_+^2)}\leq c\|u_{\nabla}^*\|_{L^p_{\lambda}}.
\end{equation}
\end{theorem}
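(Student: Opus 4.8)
The plan is to reduce the statement to the two ingredients already prepared, namely the even/odd splitting in Proposition~\ref{vv} and the conjugate-function estimate in Theorem~\ref{iv}, and then to glue them together using the elementary $p$-subadditivity inequalities available for $0<p\le 1$.

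First I would decompose $F=F_e+F_o$ with $F_e=u_e+iv_o$ and $F_o=u_o+iv_e$ as in Proposition~\ref{vv}, which guarantees that both summands are $\lambda$-analytic and that $\|F\|^p_{H^p_\lambda(\RR^2_+)}\le \|F_e\|^p_{H^p_\lambda(\RR^2_+)}+\|F_o\|^p_{H^p_\lambda(\RR^2_+)}$. The purpose of this splitting is that the real part $u_o$ of $F_o$ is odd in $x$ while the real part $u_e$ of $F_e$ is even in $x$, so that Theorem~\ref{iv} becomes applicable to each piece separately; a general $F$ need not have a real part of definite parity, which is precisely why the decomposition is required.

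Next, for the piece $F_o=u_o+iv_e$ I would apply Theorem~\ref{iv} with $u_o$ in the role of the real part to obtain $\sup_{y>0}\int_\RR |v_e(x,y)|^p|x|^{2\lambda}dx\le c\|(u_o)^*_\nabla\|^p_{L^p_\lambda}$, and symmetrically $\sup_{y>0}\int_\RR |v_o(x,y)|^p|x|^{2\lambda}dx\le c\|(u_e)^*_\nabla\|^p_{L^p_\lambda}$ for $F_e$. To pass from these conjugate bounds to a bound on the full Hardy norm, I would use that $0<p/2\le 1$ yields $(u_o^2+v_e^2)^{p/2}\le |u_o|^p+|v_e|^p$, together with the trivial pointwise estimate $|u_o(x,y)|\le (u_o)^*_\nabla(x)$ (take $s=x$ in the non-tangential supremum). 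Integrating against $|x|^{2\lambda}dx$ and taking the supremum in $y$ then gives $\|F_o\|^p_{H^p_\lambda(\RR^2_+)}\le c\|(u_o)^*_\nabla\|^p_{L^p_\lambda}$, and likewise $\|F_e\|^p_{H^p_\lambda(\RR^2_+)}\le c\|(u_e)^*_\nabla\|^p_{L^p_\lambda}$.

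It remains to control the maximal functions of the even and odd parts by that of $u$ itself. Here I would observe that $|s-x|<y$ forces simultaneously $|s-x|<y$ and $|(-s)-(-x)|<y$, so that $|u_e(s,y)|,\,|u_o(s,y)|\le\tfrac12(|u(s,y)|+|u(-s,y)|)$ produces the pointwise bounds $(u_e)^*_\nabla(x),\,(u_o)^*_\nabla(x)\le\tfrac12\big(u^*_\nabla(x)+u^*_\nabla(-x)\big)$. Using $(a+b)^p\le a^p+b^p$ for $0<p\le 1$ and the invariance of $|x|^{2\lambda}dx$ under $x\mapsto -x$, both $\|(u_e)^*_\nabla\|^p_{L^p_\lambda}$ and $\|(u_o)^*_\nabla\|^p_{L^p_\lambda}$ are dominated by a constant multiple of $\|u^*_\nabla\|^p_{L^p_\lambda}$. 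Assembling the three steps and taking $p$-th roots yields the asserted inequality. I expect the analytic substance to lie entirely in Theorem~\ref{iv}; the present argument is a careful assembly, and the only genuine care needed is the bookkeeping with $p$-subadditivity (valid precisely because $p\le 1$) and with the reflection $x\mapsto -x$, neither of which presents a real obstacle.
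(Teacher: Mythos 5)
Your proposal is correct and follows essentially the same route as the paper: both arguments reduce to the even/odd decomposition of Proposition~\ref{vv}, apply Theorem~\ref{iv} to the pairs $(u_o,v_e)$ and $(u_e,v_o)$, and glue the pieces with $p$-subadditivity and the bound of $(u_e)^*_\nabla$, $(u_o)^*_\nabla$ by $u^*_\nabla$ (which the paper asserts implicitly in its final line and you spell out via the reflection $x\mapsto-x$).
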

\begin{proof}
By Theorem\,\ref{iv} and Proposition\,\ref{vv}, we could get
\begin{equation}\begin{split}
\|F\|^p_{H_{\lambda}^p(\RR_+^2)} & =\displaystyle{\sup_{t>0}\int_{\RR}|u(x, t)^2+v(x, t)^2|^{p/2}|x|^{2\lambda}dx}\\
& \leq \displaystyle{\sup_{t>0}\int_{\RR}\left(|u(x, t)|^2+|v_o(x, t)+v_e(x, t)|^2\right)^{p/2}|x|^{2\lambda}dx} \\
& \leq \displaystyle{c\sup_{t>0}\int_{\RR}(|u(x, t)|^p+|(u_o)_{\nabla}^*(x)|^p+|(u_e)_{\nabla}^*(x)|^p)|x|^{2\lambda}dx} \\
& \leq \displaystyle{c\int_{\RR}|u_{\nabla}^*(x)|^p|x|^{2\lambda}dx}.
\end{split}\end{equation}
Thus we could deduce Formula\,(\ref{sk5}) holds.
\end{proof}

Thus by Theorem\,\ref{sss} and Proposition\,\ref{ss}(iv), we could deduce the following Theorem:
\begin{theorem}\label{ssst}
 If $F(z)=u(x, y)+iv(x, y) \in H_{\lambda}^p(\RR_+^2)$, for $\frac{2\lambda}{2\lambda+1}< p\leq1$, then the following holds:
\begin{equation}\label{sk5t}
\|F\|_{H_{\lambda}^p(\RR_+^2)}\sim \|u_{\nabla}^*\|_{L^p_{\lambda}}.
\end{equation}
\end{theorem}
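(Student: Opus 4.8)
The plan is to obtain the equivalence (\ref{sk5t}) by combining two one-sided estimates: the substantial bound already supplied by Theorem \ref{sss}, and a short reverse bound extracted from the nontangential maximal characterization in Theorem \ref{ss}(iv). Since $\sim$ denotes comparability of the two quantities, I would prove it as a matching pair of inequalities.

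First I would record that
$$\|F\|_{H_\lambda^p(\RR_+^2)} \le c\|u_\nabla^*\|_{L_\lambda^p}$$
is exactly the content of Theorem \ref{sss}, established through the Stokes-type identity (\ref{i}), Proposition \ref{7}, and the distributional estimate (\ref{iii}). This is the direction that controls the entire $\lambda$-analytic function $F$ by the nontangential maximal function of its real part $u$.

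For the opposite inequality I would invoke the trivial pointwise domination $|u(s,y)| \le |F(s,y)|$, valid for every $(s,y)\in\RR_+^2$ because $u=\mathrm{Re}\,F$. Taking the supremum over the cone $\{|s-x|<y\}$ yields $u_\nabla^*(x) \le F_\nabla^*(x)$ for all $x$, and integrating against $|x|^{2\lambda}dx$ gives $\|u_\nabla^*\|_{L_\lambda^p} \le \|F_\nabla^*\|_{L_\lambda^p}$. As $p>p_0=\frac{2\lambda}{2\lambda+1}$, Theorem \ref{ss}(iv) applies and provides $\|F_\nabla^*\|_{L_\lambda^p} \approx \|F\|_{H_\lambda^p(\RR_+^2)}$, so that
$$\|u_\nabla^*\|_{L_\lambda^p} \le \|F_\nabla^*\|_{L_\lambda^p} \le c\|F\|_{H_\lambda^p(\RR_+^2)}.$$
Putting this together with the bound from Theorem \ref{sss} yields (\ref{sk5t}).

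I do not expect a genuine obstacle in this last step: all the analytic difficulty of the characterization is concentrated in Theorem \ref{sss}, whose proof rests on the conformal-type square identity and the distributional estimate (\ref{iii}), whereas the reverse inequality is a one-line pointwise comparison fed into the already-proved maximal characterization. The only point deserving care is the standing hypothesis $p>p_0$, which is exactly what legitimizes Theorem \ref{ss}(iv) and keeps the comparison constants finite.
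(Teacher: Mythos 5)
Your proposal is correct and follows essentially the same route as the paper: the paper also obtains Theorem \ref{ssst} by combining Theorem \ref{sss} with the nontangential maximal characterization of Theorem \ref{ss}(iv), the reverse inequality coming from the pointwise domination $u_{\nabla}^*\leq F_{\nabla}^*$. Nothing is missing.
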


\end{document}